\newtheorem{theorem}{Theorem}[section]
\newtheorem{definition}[theorem]{Definition}
\newtheorem{corollary}[theorem]{Corollary}
\newtheorem{example}[theorem]{Example}
\newtheorem{lemma}[theorem]{Lemma}
\newtheorem{remark}[theorem]{Remark}
\newtheorem*{remarks*}{Remarks}
\newtheorem{proposition}[theorem]{Proposition}
\newcommand\NN{\mathbb{N}}
\newcommand\RR{\mathbb{R}}
\newcommand\EE{\mathbb{E}}
\newcommand\PP{\mathbb{P}}
\newcommand\eps{\varepsilon}
\newcommand\diam{\mathrm{diam}}
\newcommand\Graph{\text{Graph}}
\title[Box-like statistically self-affine functions]{Box-counting dimension and differentiability of box-like statistically self-affine functions}
\author[Pieter Allaart]{Pieter Allaart$^*$}
\address[P. Allaart]{Mathematics Department, University of North Texas, 1155 Union Cir \#311430, Denton, TX 76203-5017, U.S.A.}
\email{allaart@unt.edu}
\thanks{$^*$ Corresponding author.}
\author{Taylor Jones}
\address[T. Jones]{Mathematics Department, University of North Texas, 1155 Union Cir \#311430, Denton, TX 76203-5017, U.S.A.}
\email{RandallJones2@my.unt.edu}
\begin{document}

\begin{abstract}
We consider a class of ``box-like" statistically self-affine functions, and compute the almost-sure box-counting dimension of their graphs. Furthermore, we consider the differentiability of our functions, and prove that, depending on an explicitly computable functional of the model, they are almost surely either differentiable almost everywhere or non-differentiable almost everywhere.
%    In 2005, Okamoto \cite{okamoto} introduced a family of continuous functions which have interesting nondifferentiable properties based on their parameter $a$. Functions in this family are now referred to as Okamoto functions in the literature. We introduce a random process to the construction of these functions, and answer some natural questions. We give an explicit formula for the almost sure box dimension of the graph of a random Okamoto function, as well as provide sufficient conditions for when a random Okamoto function is almost everywhere differentiable, and almost nowhere differentiable.  
\end{abstract}

\subjclass[2010]{Primary: 26A27, 28A80; Secondary: 26A30, 60G42}
\keywords{Random self-affine function, Box-counting dimension, Martingale}

\maketitle

\section{Introduction}

Let $S_1,\dots,S_n$ be an iterated function system (IFS) of affine contractions on $\RR^2$, whose images of $[0,1]^2$ are rectangles with sides parallel to the coordinate axes.
It is well know that the IFS admits an attractor $E$; that is, a unique nonempty compact set satisfying 
$$E=\bigcup_{i=1}^n S_i(E)$$
see \cite[Chapter 9]{falconer}. Following the literature we call the attractor of such an IFS a {\em box-like self-affine set}.  %Often, box-like self-affine sets are a fractal which have been well studied.
Early examples of box-like self-affine sets are the generalized Sierpinski carpets of Bedford \cite{bedford1} and McMullen \cite{mcmullen}, whose Hausdorff and box dimensions these authors explicitly computed. Their work was generalized by Lalley and Gatzouras \cite{lalley}. In each of these papers, however, it is assumed that (i) each of the maps in the IFS contracts more sharply in the horizontal than in the vertical direction; and (ii) the images of the unit square under the maps ``line up" neatly in horizontal strips.
More recently, Fraser \cite{fraser} has dropped these assumptions and computed the box-counting dimension of box-like self-affine sets.

In this article, we will be looking at cases when box-like self-affine sets are the graphs of continuous functions, and adding an element of randomization.
In the deterministic setting, Bedford \cite{bedford2} computes the box-counting dimension of the graphs of such functions.
One particular class of deterministic box-like self-affine sets which are the graphs of continuous functions is known as Okamoto functions \cite{okamoto}.
For a fixed $\alpha\in(0,1)$, the graph of an Okamoto function $F_\alpha$ with parameter $\alpha$ is the attractor of the IFS
\begin{gather*}
S_0(x,y)=\begin{bmatrix} 1/3 & 0\\ 0 & \alpha \end{bmatrix} \begin{bmatrix}x\\y\end{bmatrix}, \qquad
S_1(x,y)=\begin{bmatrix} 1/3 & 0\\ 0 & 1-2\alpha \end{bmatrix}
\begin{bmatrix} x\\y \end{bmatrix}+\begin{bmatrix} 1/3\\ \alpha \end{bmatrix},\\
S_2(x,y)=\begin{bmatrix} 1/3 & 0\\ 0 & \alpha \end{bmatrix} 
\begin{bmatrix} x\\y \end{bmatrix}+\begin{bmatrix} 2/3\\ 1-\alpha \end{bmatrix}.
\end{gather*}
The family $\{F_\alpha\}$ has several well-known members, such as Perkins' function \cite{perkins} ($\alpha=5/6$), the Cantor ``devil's staircase" ($\alpha=1/2$) and even the identity function ($\alpha=1/3$). 
Pictured in Figure \ref{fig:Okamoto-graphs} are two examples of Okamoto's function $F_\alpha$.

\begin{figure}
\begin{center}
\epsfig{file=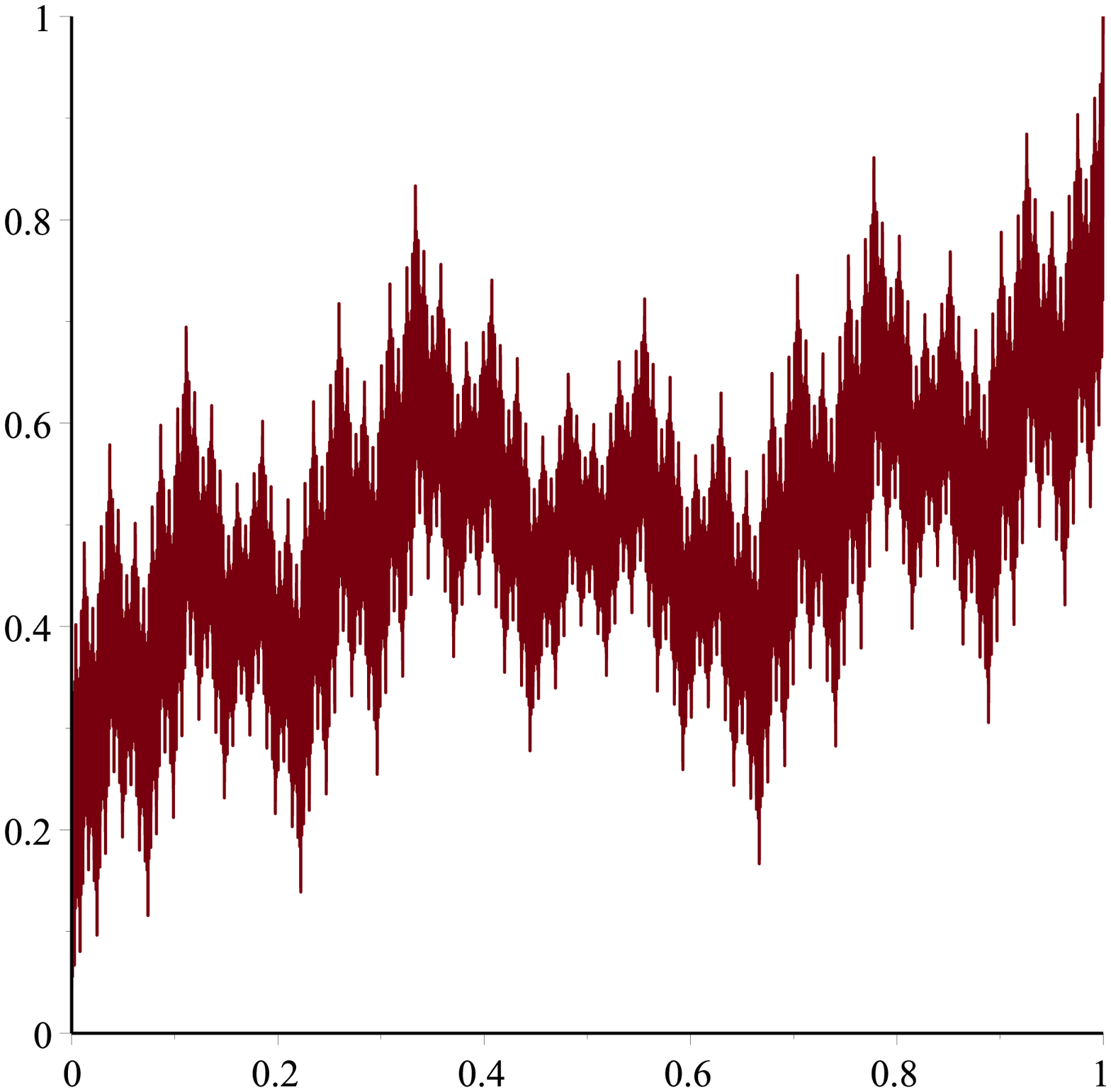, height=.25\textheight, width=.4\textwidth} \qquad\quad
\epsfig{file=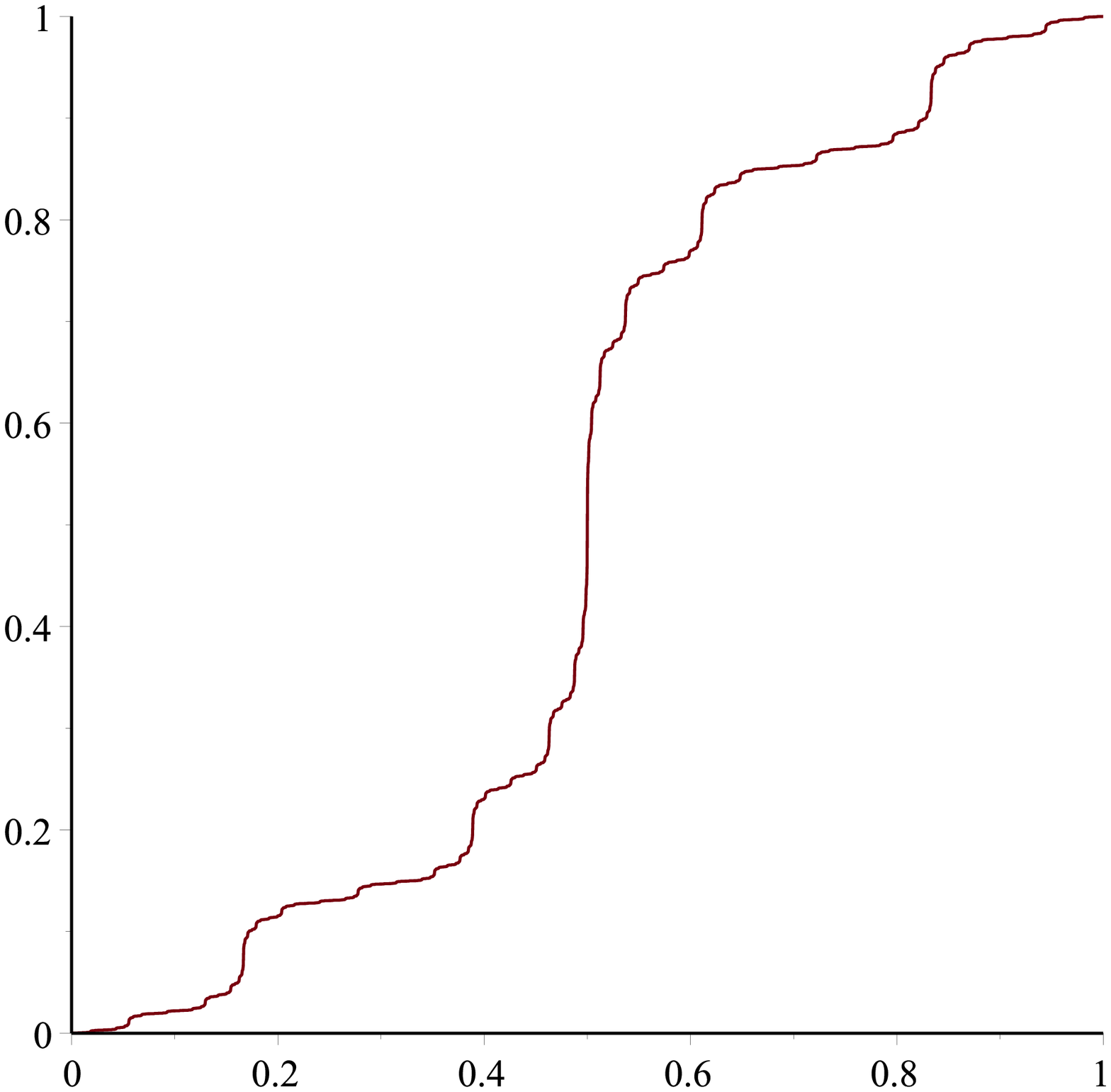, height=.25\textheight, width=.4\textwidth}
\caption{Graphs of Okamoto's function for $\alpha=5/6$ (Perkins' function; left) and $\alpha=1/5$ (right).}
\label{fig:Okamoto-graphs}
\end{center}
\end{figure}

Some basic facts on the differentiability of $F_\alpha$ are proved in \cite{okamoto}. For example, let $\alpha_0\approx 0.5598$ be the (unique) real root of $54x^3-27x^2-1=0$ in $(0,1)$. Then
\begin{itemize}
    \item If $\alpha\ge2/3$, then $F_{\alpha}$ is differentiable nowhere.
    \item If $\alpha_0\le \alpha<2/3$, then $F_{\alpha}$ is not differentiable at almost every $x\in[0,1]$, with uncountably many exceptions. (The case of $\alpha=\alpha_0$ was determined in \cite{kobayashi}).
    \item If $0<\alpha<\alpha_0$ and $\alpha\neq 1/3$, then $F_{\alpha}$ is differentiable for almost every $x\in[0,1]$, with uncountably many exceptions.
\end{itemize}
At the points where $F_{\alpha}$ is non-differentiable, it may still be the case that $F_{\alpha}$ has an infinite derivative. An extensive treatment of the set $\mathcal{D}(\alpha):=\{x\in[0,1]:F_{\alpha}'(x)=\pm\infty\}$ may be found in \cite{allaart1}.

A simple application of \cite{bedford2} gives the box-counting dimension of the graph of $F_\alpha$:
$$\dim_B \Graph{(F_\alpha)}=
\begin{cases}
 1+\frac{\log(4\alpha-1)}{\log3} & \text{ if }\alpha\ge\frac{1}{2}\\
 1 & \text{ if }\alpha<\frac{1}{2}.
\end{cases}
$$

Our goal is to prove analogous results on the differentiability and box-counting dimension of the graphs of randomized versions of these functions, which are constructed through a process having non-homogeneous contractions in the $x$-direction and random contractions in the $y$-direction.
Our construction somewhat resembles that of Dubins and Freedman \cite{dubins}, who use a probability measure on $[0,1]^2$ to create the graph of a random cumulative distribution function on $[0,1]$.
The main idea of our construction is to first partition the unit interval into $m$ pieces of (deterministic) lengths $l_0,\dots,l_{m-1}$. Then we form $m$ random affine contractions $T_0,\dots,T_{m-1}$ for which each $T_i$ maps the unit square to a rectangle having width $l_i$ and (random) height $h_i$. We iterate this process independently within each sub-rectangle, and in the limit, under very weak assumptions, obtain the graph of a continuous function. 
In Section \ref{sec:main results} we give a precise formalization these ideas. For $m=2$ our construction is contained in that of Dubins and Freedman, and the questions pursued in this article are trivial. But for $m\geq 3$ the random functions we construct are typically nowhere monotone, and both questions of differentiability and the dimension of the graph become interesting.

Graphs of random functions have of course been studied in other settings as well.
For example, Hunt \cite{hunt} calculates the almost sure Hausdorff dimension of the graphs of Weierstrass functions with random phase shifts. 
It is also well known that the graph a Brownian sample function has Hausdorff and box-counting dimension $1.5$ almost surely (see \cite[Theorem 16.4]{falconer}). 
Neither Weierstrass functions nor Brownian motion are constructed through an IFS however.

There are several ways one may add an element of randomness to the construction of box-like self-affine sets. 
For example, start with a collection $\mathbb{I}=\{\mathbb{I}_1,\dots,\mathbb{I}_n\}$, where each $\mathbb{I}_i=\{S_1^i,\dots,S_{m_i}^i\}$ is an IFS as described at the beginning of this article. At each stage of the construction, and to each rectangle in the construction at that stage, choose an IFS in $\mathbb{I}$ to apply to that rectangle.
This model was considered by Gatzouras and Lalley \cite{gatz2}, who, under restrictive assumptions on the maps in each IFS, compute the almost sure box-counting and Hausdorff dimension. 
More recently, a similar model was studied by Troscheit \cite{troscheit}, who computes the almost sure box-counting dimension under more relaxed conditions. 
The Assouad dimension in this model is considered in \cite{fraser2}.

Another example of randomizing box-like self-affine sets is to consider the translation vectors $a_i$ of the self-affine maps $\{T_i(\vec{x})=A\vec{x}+a_i\}_{i=1}^n$.
%In the following referenced articles $A\in\RR^{m\times m}$, but for our purposes we will assume $m=2$.
This was first considered by Falconer \cite{falconer2}, who shows the Hausdorff dimension of the attractor is the same for Lebesgue almost every choice of translation vectors $(a_1,\dots,a_n)$ provided the norm of each $A_i$ is at most $1/3$.
Solomyak \cite{solomyak} later showed that the bound $1/3$ can be replaced with $1/2$.
A further generalization of this result was given by Urbanski \cite{urbanski}, who considered countably many maps $\{T_i\}_{n\in\NN}$.
%Instead of Lebesgue, an appropriate product measure with infinitely many factors was used in this case.
A random analog of this work is found in \cite{jordan}, where Jordan {\em et al} consider random perturbations of the translation vectors $a_i$ at each stage of the construction. 
%However in this work, no non-trivial assumption on the norms of the linear parts $A_i$ are necessary. 

Another model for randomizing the translation vectors at each stage is studied by J\"arvenp\"a\"a \textit{et al} \cite{jarvenpaa}. 
Here the randomization has considerable overlap with the previously discussed model, where at each stage of the construction, IFS's are randomly chosen to be applied from a finite collection.

While each of the models discussed above are very natural, they are quite different from the construction considered here.
For one thing, in our affine maps, both the vertical contraction ratios {\em and} the translation vectors are random, which is necessarily the case if one wants to obtain the graph of a continuous function. Another reason is that we do not pick our affine maps from a finite (or countable) family of IFSs, but rather choose their parameters at random from an arbitrary probability distribution on $[0,1]$.

%As our goals are to 1) have potentially infinitely many realizations for every stage of the construction (not just the limit stage) and 2) guarantee that our construction will almost surely generate the graph of a continuous function.

The rest of this article is organized as follows: In Section \ref{sec:main results}, we give a formal description of our model and the precise statements of the main results. Theorem \ref{diff thm} describes a dichotomy, namely our random self-affine function is either almost surely differentiable almost everywhere, or almost surely non-differentiable almost everywhere, and the theorem specifies exactly when each case holds.
Theorem \ref{box thm} gives a formula for the almost sure box-counting dimension of the graph of a random self-affine function. At the end of Section \ref{sec:main results} we give two specific examples illustrating the theorems. 
Sections \ref{sec:differentiability} and \ref{sec:box-dimension} are dedicated to the proofs of Theorems \ref{diff thm} and \ref{box thm}, respectively.

\section{Notation and main results} \label{sec:main results}

%%%%%%%%%%%%%%%%%%%%%%%%%%%%%%%%%%%%%%%%%%%%%%%%%%%%%%%%%%%%%%%%%%%%%%%%%
%                                                                       %
%              NEW SECTION: Construction of The Graph                   %
%                                                                       %              
%%%%%%%%%%%%%%%%%%%%%%%%%%%%%%%%%%%%%%%%%%%%%%%%%%%%%%%%%%%%%%%%%%%%%%%%%

We begin by describing the notation used for the symbolic space. First, let $0=b_0<b_1<\dots<b_{m-1}<b_m=1$ be a partition of $[0,1]$ into $m$ pieces. For each finite word of length $n$ 
$$\omega=\omega_1,\dots,\omega_n\in\mathcal{I}_n:=\{0,1,\dots,m-1\}^n,$$ 
we define
$$b_\omega:=\sum_{k=1}^nb_{\omega_k}\prod_{i=1}^{k-1}(b_{\omega_{i}+1}-b_{\omega_{i}}),$$
where the empty product will be taken to mean $1$.
We let 
$$\mathcal{I}^{*}:=\bigcup_{n=0}^\infty\mathcal{I}_n$$
denote the set of all finite words, including the empty word if $n=0$.
 The restriction of a word $\omega$ to $k\in\NN$ is denoted by $\omega|_k:=\omega_1,\dots,\omega_k\in\mathcal{I}_k$, provided $k\leq|\omega|,$ the length of $\omega$.
 Concatenation of a finite word $\omega\in\mathcal{I}^*$ with $i\in\{0,1,\dots,m-1\}$ will be denoted by $\omega i$. 
 For $\omega\in\mathcal{I}^*$, we say $\omega|_{|\omega|-1}$ is the parent of $\omega$, and $\omega i$ is a child of $\omega$.
We will denote the set of infinite words by
$$\mathcal{I}:=\{0,1,\dots,m-1\}^\NN.$$

We will now construct the graph of a random self-affine function. First fix a probability space
$(\Theta,\mathscr{F},\PP)$, and let 
\[
\{\mathbf{Y}_\omega=(y_{\omega1},\dots,y_{\omega(m-1)}):\omega\in\mathcal{I}^*\} 
\]
be a collection of independent random vectors, all having the same joint distribution, defined on $\Theta$. The base vector is $\mathbf{Y}_\varnothing:=(y_1,\dots,y_{m-1})$. For each $\omega\in \mathcal{I}^*$ we also set
\[
y_{\omega 0}:=0, \qquad y_{\omega m}:=1.
\]
Note that for fixed $\omega$, we do not require the independence of the random variables $y_{\omega 1},\dots,y_{\omega(m-1)}$. To avoid degenerate cases, we assume $\PP(y_i\in\{0,1\})=0$ for all $i=1,\dots,m-1$. This is to prevent any jump discontinuities. We also assume that
\begin{equation} \label{eq:not-diagonal}
\PP(y_{i+1}-y_i=b_{i+1}-b_i)<1, \qquad i=1,\dots,m-1.
\end{equation}
This assumption is used only in Proposition \ref{prop:easy}.

Now for each $\omega\in\mathcal{I}^*$ we define random affine transformations $T_{\omega i}: [0,1]^2\to[0,1]^2$ for $i=0,1,\dots,m-1$ by
$$T_{\omega i}:\begin{bmatrix}
x\\
y\end{bmatrix}
\mapsto
\begin{bmatrix}
b_{i+1}-b_{i} & 0\\
0 & y_{\omega (i+1)}-y_{\omega i}
\end{bmatrix}
\begin{bmatrix}
x\\
y
\end{bmatrix}+
\begin{bmatrix}
b_{i}\\
y_{\omega i}
\end{bmatrix}.
$$
Then for $n\geq 1$ and $\omega=\omega_1,\dots,\omega_n\in\mathcal{I}_n$, we define $R_{\omega}$ to be the rectangle
\[
R_\omega:=T_{\omega|_1}\circ T_{\omega|_2}\circ\dots\circ T_{\omega|_{n-1}}\circ T_\omega ([0,1]^2).
\]

Finally, let
\begin{equation}
\Gamma:=\bigcap_{n\in\NN}\bigcup_{\omega\in\mathcal{I}_n}R_\omega.   
\end{equation}
We use the following notation throughout: For $\omega\in\mathcal{I}_n\setminus\{(m-1,\dots,m-1)\}$, let $\omega'$ denote the smallest word in $\mathcal{I}_n$ (according to lexicographical order) larger than $\omega$.
If $\omega$ is maximal in $\mathcal{I}_n$, i.e. $\omega=(m-1,m-1,\dots,m-1)$, define $b_{\omega'}:=1$ and $y_{\omega'}:=1$. 
For $\omega=\omega_1,\dots,\omega_n\in\mathcal{I}_n$,
$$a_\omega:=|y_{\omega'}-y_{\omega}|$$
will denote the the ratio of the height of $R_\omega$ to the height of its parent rectangle, while
$$h_\omega:=\prod_{k=1}^{n}a_{\omega|_k}$$
will be used to denote the height of the rectangle $R_\omega$.

%%%%%%%%%%%%%%%%%%%%%%%%%%%%%%%%%%%%%%%%%%%%%%%%%%%%%%%%%%%%%%%%%%%%%%%%%
%                                                                       %
%         The Graph is a Continuous Function Proposition                %
%                                                                       %              
%%%%%%%%%%%%%%%%%%%%%%%%%%%%%%%%%%%%%%%%%%%%%%%%%%%%%%%%%%%%%%%%%%%%%%%%%

\begin{proposition} \label{prop:cont}
With probability one, the set $\Gamma$ is the graph of a continuous function $F:[0,1]\to[0,1]$.
\end{proposition}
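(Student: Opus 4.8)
The plan is to treat the horizontal and vertical behaviour separately. In the $x$-direction everything is deterministic: for every realization, the level-$n$ rectangles $\{R_\omega:\omega\in\mathcal{I}_n\}$ have projections that tile $[0,1]$, are nested across levels, and fit together along common corners. The only probabilistic input is that the heights $h_\omega$ vanish uniformly over $\omega\in\mathcal{I}_n$ as $n\to\infty$, almost surely. Granting both, a soft compactness argument identifies $\Gamma$ as the graph of a continuous $F\colon[0,1]\to[0,1]$.

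First I would set $T^\omega:=T_{\omega|_1}\circ\cdots\circ T_\omega$, so $R_\omega=T^\omega([0,1]^2)$, and prove by induction on $n$ that $T^\omega(1,1)=T^{\omega'}(0,0)$ whenever $\omega'$ is the lexicographic successor of $\omega$ in $\mathcal{I}_n$. When $\omega=\sigma i$ and $\omega'=\sigma(i+1)$ with $i<m-1$, this is the one-line identity $T_{\sigma i}(1,1)=(b_{i+1},y_{\sigma(i+1)})=T_{\sigma(i+1)}(0,0)$ precomposed with $T^\sigma$; when $\omega$ ends in $m-1$, so that $\omega'=\tau0$ with $\tau$ the successor of the parent word, it reduces to the inductive hypothesis for the pair of parent words. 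From this one reads off, for every $n$ and every realization: the intervals $\pi_x(R_\omega)=[b_\omega,b_{\omega'}]$, $\omega\in\mathcal{I}_n$, have disjoint interiors and union $[0,1]$; $R_{\omega i}\subseteq R_\omega$; and $R_\omega$ shares a corner with its right neighbour $R_{\omega'}$ on the line $x=b_{\omega'}$. Hence $K_n:=\bigcup_{\omega\in\mathcal{I}_n}R_\omega$ is a decreasing sequence of nonempty compacta, so $\Gamma=\bigcap_nK_n$ is nonempty and compact with $\pi_x(\Gamma)=[0,1]$; and if $(x,u),(x,v)\in\Gamma$, then for each $n$ the level-$n$ rectangles through these two points are equal or adjacent, so comparing each of $u,v$ with the shared corner gives $|u-v|\le 2M_n$, where $M_n:=\max_{\omega\in\mathcal{I}_n}h_\omega$.

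The crux --- and the step I expect to require the most care --- is to show $M_n\to0$ almost surely. The naive bound $\EE[M_n]\le\EE\big[\sum_{\omega\in\mathcal{I}_n}h_\omega\big]$ is worthless, since $\sum_{\omega\in\mathcal{I}_n}h_\omega$ is the variation of $F$ across the level-$n$ breakpoints and $\EE\big[\sum_{\omega\in\mathcal{I}_n}h_\omega\big]=\big(\sum_{i=0}^{m-1}\EE|y_{i+1}-y_i|\big)^n\ge1$ (with $y_0:=0$, $y_m:=1$). The way around this is to pass to a higher moment. Each height $h_\omega$ is a product of $|\omega|$ absolute vertical contraction ratios, the $k$-th distributed as $a^{(\omega_k)}:=|y_{\omega_k+1}-y_{\omega_k}|$, and these factors are independent because they are functions of the distinct-index vectors $\mathbf{Y}_{\omega|_0},\dots,\mathbf{Y}_{\omega|_{|\omega|-1}}$. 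The hypothesis $\PP(y_j\in\{0,1\})=0$ forces $a^{(i)}<1$ almost surely for each $i$, so by bounded convergence $q(t):=\max_{0\le i\le m-1}\EE[(a^{(i)})^t]\to0$ as $t\to\infty$; fix $t$ with $mq(t)<1$. Then $\EE[h_\omega^t]=\prod_k\EE[(a^{(\omega_k)})^t]\le q(t)^{|\omega|}$, so by a union bound and Markov's inequality $\PP(M_n>\eps)\le\sum_{\omega\in\mathcal{I}_n}\eps^{-t}\EE[h_\omega^t]\le\eps^{-t}(mq(t))^n$, which is summable in $n$; Borel--Cantelli then gives $M_n\to0$ a.s.

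Finally, on the almost sure event $\{M_n\to0\}$ the bound $|u-v|\le2M_n$ shows that $\Gamma$ meets each vertical line in exactly one point, so $\pi_x\colon\Gamma\to[0,1]$ is a continuous bijection from a compact space, hence a homeomorphism; then $F:=\pi_y\circ\pi_x^{-1}$ is continuous, $\Gamma=\Graph(F)$, and $F$ takes values in $[0,1]$ because $\Gamma\subseteq[0,1]^2$. As indicated above, the only delicate point is the uniform decay of the heights: the obvious first-moment estimate is off by a geometrically growing factor, and the remedy rests on the no-jump hypothesis, which alone guarantees $a^{(i)}<1$ almost surely and hence lets $\EE[(a^{(i)})^t]$ be driven below $1/m$.
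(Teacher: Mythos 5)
Your proposal is correct and follows essentially the same route as the paper: both reduce the problem to showing $\max_{\omega\in\mathcal{I}_n}h_\omega\to0$ almost surely, and both achieve this by choosing a high enough moment (your $t$, the paper's $r$) so that $\sum_{\omega\in\mathcal{I}_n}\EE[h_\omega^t]$ decays geometrically, then applying Markov's inequality and Borel--Cantelli. Your write-up is somewhat more explicit about the deterministic geometry (corner-sharing of adjacent rectangles and uniqueness of the point on each vertical line), which the paper leaves implicit, but the core argument is identical.
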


\begin{proof}
We modify the argument from the proof of \cite[Theorem 4.1]{dubins}.
Each $x\in[0,1]$ admits an infinite word $\omega\in\mathcal{I}$ such that $x=\lim_{n\to\infty}b_{\omega|_n}$. If a number $x$ admits two such words, we take the one ending in all zeros. For any realization $\theta\in\Theta$ we define
\begin{equation}
\label{def of F}
  (x,F_\theta(x)):=\bigcap_{n=1}^\infty R_{\omega|_n}(\theta).  
\end{equation}

To justify that this is indeed well-defined, we need to show that for $\mathbb{P}$-almost every $\theta\in\Theta$, $\diam{R_{\omega|_n}(\theta)}\to0$ for all $\omega\in\mathcal{I}.$
Since the bases of the rectangles are deterministic, their widths clearly go to zero. Thus, it suffices to show 
\begin{equation} \label{eq:max-height}
\lim_{n\to\infty}\max\{h_{\omega}(\theta):\omega\in\mathcal{I}_n\}=0
\end{equation}
for $\mathbb{P}$-almost all $\theta$.
%It is of course sufficient to show
%$$\lim_{n\to\infty}\sum_{\omega\in\mathcal{I}_n}h_\omega^r=0\quad\text{a.s.}$$
%for some $r>0$. 
To that end, we note that the assumption $\mathbb{P}(y_i\in\{0,1\})=0$ implies that $\mathbb{P}(a_i<1)=1$ for all $i=0,1,\dots,m-1$. Thus, $a_i^r\to 0$ almost surely for each $i$ as $r\to \infty$, so by the Bounded Convergence Theorem there exists an $r>1$ such that 
$$\rho:=\sum_{i=0}^{m-1}\EE(a_{i}^r)<1.$$
Therefore
$$\EE\left(\sum_{\omega\in\mathcal{I}_n}h_\omega^r\right)=\rho^n.$$
Now by Markov's inequality,
%$\mathbb{P}\Big(\sum_{\omega\in\mathcal{I}_n}h_{\omega}^r\ge\eps\Big)\le\rho^n/\eps,$ and summing over $n$ gives
$$\sum_{n=1}^\infty \mathbb{P}\left(\sum_{\omega\in\mathcal{I}_n}h_{\omega}^r\ge\eps\right)\le \sum_{n=1}^\infty\frac{1}{\eps}\EE\left(\sum_{\omega\in\mathcal{I}_n}h_{\omega}^r\right)\le \frac{1}{\eps}\sum_{n=1}^\infty \rho^n<\infty.$$
Hence by the first Borel-Cantelli lemma, the probability that $\sum_{\omega\in\mathcal{I}_n}h_{\omega}^r\ge\eps$ infinitely often is zero. Since $\eps$ was arbitrary, it follows that $\sum_{\omega\in\mathcal{I}_n}h_{\omega}^r\to 0$ almost surely, which then implies \eqref{eq:max-height} for $\mathbb{P}$-almost all $\theta$.

Now for each $\theta$ such that \eqref{eq:max-height} holds, the right hand side of \eqref{def of F} is a single point for every $\omega\in\mathcal{I}$.
%Now, since $R_{\omega|_n}$ form a nested sequence of compact sets whose diameters almost surely tend to zero, we have that the right hand side of equation (\ref{def of F}) is almost surely a unique point in $[0,1]^2$.
Therefore $\Gamma$ is the graph of a function $F$ with probability one, and $F$ is continuous since $\Gamma$ is closed.
%To get continuity, we simply note that $\Gamma$ is an intersection of closed sets, and is therefore itself a closed set.
%Since a function having a closed graph is continuous, $\Gamma$ must almost surely be the graph a continuous function.
\end{proof}

%%%%%%%%%%%%%%%%%%%%%%%%%%%%%%%%%%%%%%%%%%%%%%%%%%%%%%%%%%%%%%%%%%%%%%%%%
%                                                                       %
%          Definition & Toy Example of the Construction                 %
%                                                                       %              
%%%%%%%%%%%%%%%%%%%%%%%%%%%%%%%%%%%%%%%%%%%%%%%%%%%%%%%%%%%%%%%%%%%%%%%%%

\noindent {\bf Convention:} Throughout this paper, by a ``random self-affine function" we shall mean a random function $F$ constructed by the above process.

\begin{figure}
    \centering
    \includegraphics[scale =0.4]{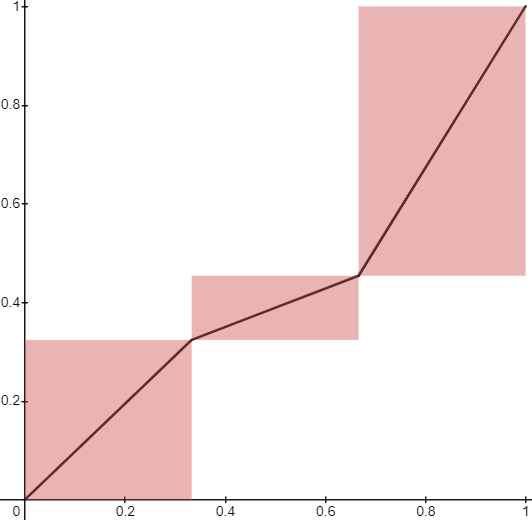}
    \caption{First iteration}
    \label{level 1}
\end{figure}

\bigskip
We illustrate our construction by taking $b_i=i/3$ for $i=0,1,2,3$. At the first stage we choose $y_1$ and $y_2$ uniformly and independently in $[0,1]$. In the first stage we get three rectangles $R_0$, $R_1,$ and $R_2$ (see Figure \ref{level 1}).
Then we repeat this process to get three sub-rectangles for each of $R_0$, $R_1$ and $R_2.$ These are seen in the left-hand graphs of Figure \ref{level 2}. Taking the images of these sub-rectangles under the appropriate maps gives the nine rectangles $R_{00},R_{01},\dots,R_{22}$ seen in the right most graph of Figure \ref{level 2}.
We show the diagonals of the rectangles in Figures \ref{level 1} and \ref{level 2} to emphasize how the graph takes shape. Figure \ref{level 3} shows the graph after five iterations.

\begin{figure}
    \begin{center}
        \includegraphics[scale=0.4]{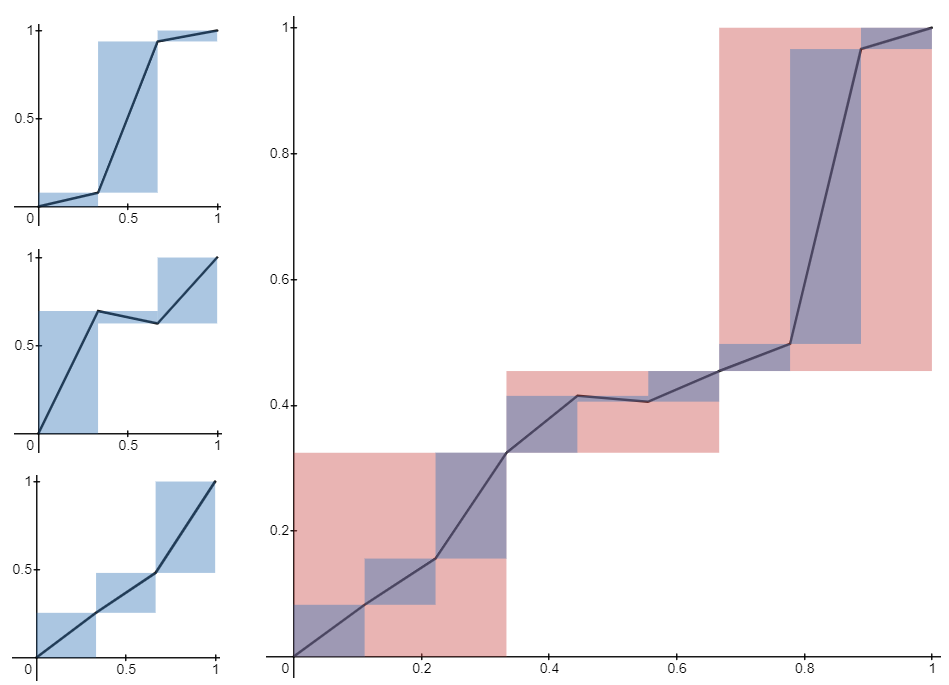}
        \caption{Second iteration}
        \label{level 2}
    \end{center}
\end{figure}

\begin{figure}
    \centering
    \includegraphics[scale =0.5]{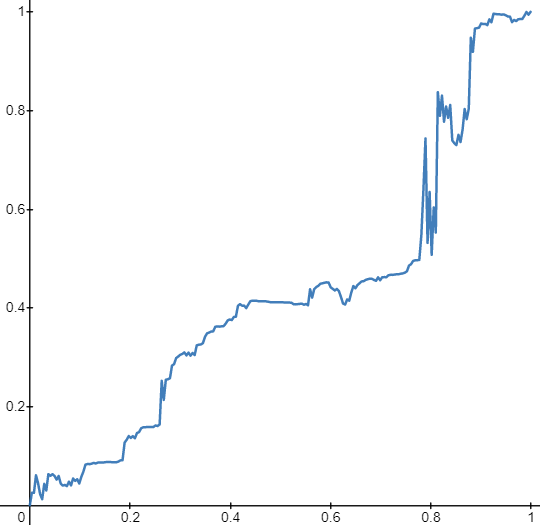}
    \caption{Fifth iteration}
    \label{level 3}
\end{figure}

%\begin{center}
    %\includegraphics[scale = 0.6]{random f5.PNG}
%\end{center}

For a given partition $0=b_0<b_1<\dots<b_{m-1}<b_m=1$, let $l_i:=b_{i+1}-b_i$ be the length of the interval $[b_i,b_{i+1}]$. Let $\lambda$ denote Lebesgue measure on $[0,1]$. Notice that for any finite word $\omega=(\omega_1,\dots,\omega_n)\in\mathcal{I}_n$,
$$l_\omega:=\lambda([b_\omega,b_{\omega'}])=\prod_{i=1}^nl_{\omega_i}.$$

We now state our main results, and provide examples.

%%%%%%%%%%%%%%%%%%%%%%%%%%%%%%%%%%%%%%%%%%%%%%%%%%%%%%%%%%%%%%%%%%%%%%%%%
%                                                                       %
%                   Statements of Main Theorems                         %
%                                                                       %              
%%%%%%%%%%%%%%%%%%%%%%%%%%%%%%%%%%%%%%%%%%%%%%%%%%%%%%%%%%%%%%%%%%%%%%%%%

\begin{theorem} \label{diff thm}
Let $F$ be a random self-affine function, and compute
$$\varphi:=\sum_{i=0}^{m-1}l_i(\EE\log a_i-\log l_i).$$
\begin{enumerate}
    \item If $\varphi<0$,
%    $$\sum_{i=0}^{m-1}l_i(\EE\log a_i-\log l_i)<0,$$
   then almost surely, $F'(x)=0$ at $\lambda$-almost every $x\in[0,1]$.
    \item If $\varphi\geq 0$,
%    $$\sum_{i=0}^{m-1}l_i(\EE\log a_i-\log l_i)\ge 0,$$
    then almost surely, $F$ is non-differentiable at $\lambda$-almost every $x\in[0,1]$.
    %and in the specific case when this sum is exactly equal to $0$
    %$$(\star)\:\sum_{i=0}^{m-1}p_i\EE\Big(\log^2\frac{a_i}{p_i}\Big)<\infty.$$
\end{enumerate}
\end{theorem}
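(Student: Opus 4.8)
The plan is to analyze the derivative of $F$ along the "natural" dyadic-type filtration generated by the partition. For $x \in [0,1]$ with coding $\omega \in \mathcal{I}$, the increment ratio over the $n$-th level interval containing $x$ is
\[
Q_n(x) := \frac{F(b_{\omega|_n'}) - F(b_{\omega|_n})}{b_{\omega|_n'} - b_{\omega|_n}} = \pm\frac{h_{\omega|_n}}{l_{\omega|_n}} = \pm\prod_{k=1}^n \frac{a_{\omega|_k}}{l_{\omega_k}}.
\]
So $\log|Q_n(x)| = \sum_{k=1}^n (\log a_{\omega|_k} - \log l_{\omega_k})$, a sum of independent terms once we fix that the $k$-th summand depends on $\mathbf{Y}_{\omega|_{k-1}}$ and on $\omega_k$. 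The first step is to put a probability measure on codings: choose $\omega_1,\omega_2,\dots$ i.i.d. with $\PP(\omega_k = i) = l_i$, which pushes forward to Lebesgue measure $\lambda$ on $[0,1]$ (this is exactly the statement $l_\omega = \prod l_{\omega_i}$). Under the product of $\PP$ (on $\Theta$) and this coding measure, the summands $X_k := \log a_{\omega|_k} - \log l_{\omega_k}$ are i.i.d. with mean $\EE X_k = \sum_{i=0}^{m-1} l_i(\EE\log a_i - \log l_i) = \varphi$. By the strong law of large numbers, $\frac1n \log|Q_n(x)| \to \varphi$ for $\lambda$-a.e.\ $x$, almost surely (after a Fubini argument to exchange the two "almost surely"s).

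For part (1), when $\varphi < 0$: the SLLN gives $|Q_n(x)| \to 0$ exponentially for a.e.\ $x$. To upgrade convergence along the level intervals to genuine differentiability with $F'(x) = 0$, I would bound the oscillation of $F$ on the $n$-th level interval $I_n(x) = [b_{\omega|_n}, b_{\omega|_n'}]$ by a constant times $h_{\omega|_n}$ plus the heights of the neighboring level-$n$ rectangles (since $F$ restricted to $R_{\omega|_n}$ has graph inside that rectangle, $\mathrm{osc}_{I_n(x)} F \le h_{\omega|_n}$, and for a point $y$ in an adjacent interval one controls $|F(y)-F(x)|$ by $h_{\omega|_n}$ plus the height of at most one neighbor). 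Then for any $y \ne x$, choosing $n$ so that $y \notin I_n(x)$ but $y \in I_{n-1}(x)$ (or handling the boundary case where $y$ lands in a sibling), one gets $|F(y) - F(x)| \le C(h_{\omega|_n} + h_{\tau})$ where $\tau$ is the sibling level-$n$ word, while $|y - x| \ge c\, l_{\omega|_{n-1}} \cdot l_{\min}$ or similar. The key point is that $h_{\omega|_n}/l_{\omega|_{n-1}} \to 0$; one must also control the sibling height $h_\tau$, which differs from $h_{\omega|_n}$ only in the last factor $a_\tau$ versus $a_{\omega|_n}$ (same prefix $\omega|_{n-1}$), so $h_\tau \le h_{\omega|_{n-1}}$ and $h_{\omega|_{n-1}}/l_{\omega|_{n-1}} \to 0$ as well by the same SLLN (shifting index by one changes nothing). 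This yields $F'(x) = 0$ a.e.

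For part (2), when $\varphi \ge 0$: I would show non-differentiability by exhibiting, for a.e.\ $x$, two sequences of points approaching $x$ along which the difference quotients fail to converge to a common finite limit. When $\varphi > 0$, $|Q_n(x)| \to \infty$, so the difference quotient $\big(F(b_{\omega|_n'}) - F(b_{\omega|_n})\big)/(b_{\omega|_n'}-b_{\omega|_n})$ is unbounded in $n$, ruling out a finite derivative immediately (using that $b_{\omega|_n}, b_{\omega|_n'} \to x$ from the same side or straddling $x$; one must be slightly careful that both endpoints are on the same side, which can be arranged by passing to a sub-interval, or by comparing $F(b_{\omega|_n})$ with $F(x)$ directly and noting at least one of the two one-sided quotients blows up). The boundary case $\varphi = 0$ is the delicate one: here $\frac1n\log|Q_n(x)| \to 0$ but $\log|Q_n(x)| = \sum_{k\le n} X_k$ is (generically) a nondegenerate mean-zero random walk, so by recurrence/oscillation of such walks $\limsup_n |Q_n(x)| = \infty$ a.s.\ (this needs $\mathrm{Var}(X_1) > 0$, which is where assumption \eqref{eq:not-diagonal} would be invoked to rule out the degenerate case $a_i \equiv l_i$; if $\mathrm{Var}(X_1) = 0$ then $X_1 = 0$ a.s., $F$ is affine on each cylinder, and the claim must be checked separately — likely $F$ is then piecewise linear with a.e.\ derivative, contradicting $\varphi=0$ unless the map is trivial, so this case is vacuous or handled by \eqref{eq:not-diagonal}). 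So along a subsequence the difference quotients over nested intervals blow up, precluding differentiability.

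The main obstacle I anticipate is the passage from control along the privileged sequence of level intervals $I_n(x)$ to control over \emph{all} nearby points $y$ — i.e., the geometric "squeezing" argument in part (1) and, to a lesser extent, the one-sidedness bookkeeping in part (2). This requires a clean estimate saying that a point $y$ at distance $\delta$ from $x$ forces $|F(y) - F(x)|$ to be comparable to the heights of a bounded number of level-$n$ rectangles where $n \approx n(\delta)$, and that neighboring rectangle heights are comparable to $h_{\omega|_n}$ up to one $a_i$ factor — together with the fact that the $a_i$ are bounded away from $0$ is \emph{not} assumed, so one cannot simply bound $h_\tau/h_{\omega|_n}$; instead one should compare both to $h_{\omega|_{n-1}}$ and use the same SLLN statement shifted by one level. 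I expect this to go through but it is the step requiring the most care.
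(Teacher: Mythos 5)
Your overall architecture matches the paper's: Lebesgue measure on $[0,1]$ is realized as the $(l_0,\dots,l_{m-1})$-Bernoulli measure on codings, the logarithms of the secant slopes over basic intervals become a random walk with i.i.d.\ steps of mean $\varphi$ under the product measure $\PP\times\lambda$, the case $\varphi>0$ follows from exponential blow-up of those slopes, and the critical case $\varphi=0$ is handled exactly as in the paper via the oscillation theorem for non-degenerate mean-zero walks (the paper uses $S_n>0$ i.o.\ together with Proposition \ref{prop:easy}, which says the only possible finite derivative is $0$; your stronger claim $\limsup_n S_n=+\infty$ also works and rules out a finite derivative directly). Part (2) of your proposal is therefore essentially the paper's argument and is sound, modulo the non-degeneracy discussion you already give.

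The genuine gap is in part (1), at precisely the step you flag as delicate: passing from the level intervals $I_n(x)$ to arbitrary nearby points $y$. Your proposed lower bound $|y-x|\ge c\,l_{\omega|_{n-1}}l_{\min}$ for $y\in I_{n-1}(x)\setminus I_n(x)$ is false: if the coding of $x$ has a long run of the digit $m-1$ (resp.\ $0$) beginning at position $n+1$, then $x$ lies extremely close to the right (resp.\ left) endpoint of $I_n(x)$, and a point $y$ just across that endpoint satisfies $|y-x|\ll l_{\omega|_n}$ even though the smallest basic interval containing both is $I_{n-1}(x)$ or larger. Concretely, with $b_i=i/3$, take $x=\tfrac13-\eps$ and $y=\tfrac13+\eps$: here $n=1$ and $|y-x|=2\eps$ admits no such lower bound. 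So the quotient $2h_{\omega|_{n-1}}/|y-x|$ is not controlled by your estimate. The paper closes this gap with Lemmas \ref{lem:almost-all-normal}--\ref{p normal}: for $\vec{p}$-normal $x$ the first-return time satisfies $M_n(x)=n+o(n)$, hence the run of extremal digits following position $n(\delta)$ has length $o(n(\delta))$, giving $|\delta|\ge L^{\,o(n(\delta))}\,l_{\omega|_{n(\delta)}}$; this subexponential correction is then absorbed by the exponential decay $h_{\omega|_n}/l_{\omega|_n}\le e^{n(\varphi+o(1))}$ with $\varphi<0$. Without some such run-length control your squeezing argument does not close; the remaining ingredients of your part (1) (the SLLN computation of the rate, and comparing the sibling height to $h_{\omega|_{n-1}}$ rather than to $h_{\omega|_n}$) are correct.
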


We also calculate the almost sure box-counting dimension of the graph of random self-affine functions. Before stating the theorem, we recall the definition of the box-counting dimension of a set:
For $\delta>0$ and a bounded set $E\subseteq\RR^n$, let
$$N_\delta(E):=\min\{\#\mathcal{A}:\mathcal{A}\text{ is a cover of }E\text{ by cubes of side length }\delta\}.$$

\begin{definition}
Let $E\subseteq\RR^n$ be bounded. The {\em upper} and {\em lower box-counting dimensions} of $E$ are defined as
$$\overline{\dim}_B\, E:=\limsup_{\delta\to0}\frac{\log N_\delta(E)}{-\log\delta}\quad \text{ and }\quad \underline{\dim}_B\, E:=\liminf_{\delta\to0}\frac{\log N_\delta(E)}{-\log\delta},$$
respectively. If the upper and lower box-counting dimensions agree, then we say the {\em box-counting dimension} of $E$ is 
$$\dim_B E:=\lim_{\delta\to0}\frac{\log N_\delta(E)}{-\log\delta}.$$
\end{definition}

For a general overview of the basic facts about box-counting dimension, we refer the reader to \cite[Chapter 3]{falconer}. %In \cite{troscheit}, Troscheit also considers the box-dimension of random self-affine sets, (a class to which graphs of random Okamoto functions  belong). Although the process of randomization in our construction is quite different, and the results cannot be carried over to our case.

\begin{theorem} \label{box thm}
Let $s$ be the unique solution to the equation 
\begin{equation} \label{eq:box-dimension-equation}
1=\EE\left(\sum_{i=0}^{m-1}a_il_i^{s-1}\right).
\end{equation}
Then almost surely, $s$ is the box-counting dimension of the graph of $F$.
\end{theorem}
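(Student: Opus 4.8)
The plan is to estimate $N_\delta(\Graph(F))$ by decomposing the graph according to the rectangles $R_\omega$ at an appropriate ``stopping level,'' following the standard approach for box dimensions of self-affine graphs (as in Bedford and Fraser) but with the level chosen adaptively so that all relevant rectangles have comparable width. Concretely, for a given small $\delta>0$, let $\Lambda_\delta$ be the antichain of words $\omega\in\mathcal{I}^*$ such that $l_\omega\le\delta<l_{\omega|_{|\omega|-1}}$; since $\max_i l_i<1$, every such $l_\omega$ is within a bounded factor of $\delta$. For each $\omega\in\Lambda_\delta$, the rectangle $R_\omega$ has width $\asymp\delta$ and height $h_\omega$, so the portion of the graph lying in the vertical strip over $[b_\omega,b_{\omega'}]$ is contained in $R_\omega$ and hence can be covered by $O(1+h_\omega/\delta)$ squares of side $\delta$. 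This gives the upper bound
\[
N_\delta(\Graph(F))\le C\sum_{\omega\in\Lambda_\delta}\left(1+\frac{h_\omega}{\delta}\right).
\]
For the lower bound one argues that within each $R_\omega$ the graph oscillates enough — using continuity of $F$ together with the fact that $F$ takes values near both the top and bottom of $R_\omega$ along sub-rectangles — so that at least a constant times $\max\{1,h_\omega/\delta\}$ squares are needed; summing over the disjoint horizontal supports gives a matching lower bound of the same form. (One must be a little careful: a rectangle contributing ``$+1$'' only requires the graph to be nonempty there, which it always is, so the count of ``$1$'' terms is just $\#\Lambda_\delta\asymp\delta^{-1}$, matching the trivial fact $\dim_B\ge 1$.)

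The analysis then reduces to controlling the random sum $M_\delta:=\sum_{\omega\in\Lambda_\delta}h_\omega$ and showing $\log N_\delta/(-\log\delta)\to s$ almost surely. The key algebraic observation is that $s$ is exactly the exponent for which the process
\[
Z_n^{(t)}:=\sum_{\omega\in\mathcal{I}_n}h_\omega\, l_\omega^{\,t-1},\qquad t=s,
\]
has $\EE(Z_n^{(s)})=1$ for all $n$, since by independence and the i.i.d.\ structure $\EE(Z_{n}^{(s)}\mid \mathcal{F}_{n-1})=Z_{n-1}^{(s)}\cdot\EE\big(\sum_i a_i l_i^{s-1}\big)=Z_{n-1}^{(s)}$ by \eqref{eq:box-dimension-equation}; thus $(Z_n^{(s)})$ is a nonnegative martingale, converging a.s.\ to some limit $Z_\infty\ge 0$. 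To pass from the fixed-level sums $Z_n^{(s)}$ to the stopping-level sum $M_\delta$, I would note that along the antichain $\Lambda_\delta$ the quantity $\sum_{\omega\in\Lambda_\delta}h_\omega l_\omega^{s-1}$ is a ``stopped'' version of the same martingale (an optional-stopping / Doob argument on the branching structure), with expectation $1$; since $l_\omega^{s-1}\asymp\delta^{s-1}$ on $\Lambda_\delta$, this yields $\EE(M_\delta)\asymp\delta^{1-s}$, and combined with the covering bound gives $\EE N_\delta\asymp\delta^{-s}$ (note $s\ge 1$, so the $h_\omega/\delta$ term dominates $\delta^{-1}$ when $s>1$, and when $s=1$ we just get dimension $1$). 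An application of Markov's inequality along a geometric sequence $\delta_k=\eta^k$ plus Borel–Cantelli gives the almost sure upper bound $\overline{\dim}_B\Graph(F)\le s$.

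For the almost sure lower bound I need that $M_\delta$ does not decay faster than $\delta^{1-s}$, i.e.\ that the martingale limit $Z_\infty$ is strictly positive a.s. The standard tool here is to show the martingale $(Z_n^{(s)})$ is uniformly integrable, or better $L^p$-bounded for some $p>1$, via the recursive (smoothing-transform) structure $Z_\infty\overset{d}{=}\sum_{i} a_i l_i^{s-1} Z_\infty^{(i)}$ with the $Z_\infty^{(i)}$ i.i.d.\ copies; a Kahane–Peyrière / Biggins-type argument shows $Z_\infty>0$ a.s.\ provided $\EE(Z_1^{(s)}\log^+ Z_1^{(s)})<\infty$, which holds here since $Z_1^{(s)}=\sum_i a_i l_i^{s-1}$ is bounded. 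On the event $\{Z_\infty>0\}$ one then gets $M_{\delta}\ge c\,\delta^{1-s}$ for all small $\delta$ along the stopping levels, hence $N_\delta\ge c\,\delta^{-s}$, giving $\underline{\dim}_B\Graph(F)\ge s$ a.s. and completing the proof.

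The main obstacle I anticipate is the transition between fixed combinatorial levels $\mathcal{I}_n$ and the $\delta$-adapted antichains $\Lambda_\delta$: because the widths $l_i$ are inhomogeneous, a single level-$n$ does not correspond to a single scale $\delta$, so one must run the martingale argument on the stopping-line $\Lambda_\delta$ directly and verify that optional stopping and the uniform-integrability estimates survive this change — in particular that the sup over the antichain of the ``overshoot'' factors $l_\omega/\delta$ stays bounded, and that the a.s.\ positivity of the branching-martingale limit is preserved when read along these irregular cutsets rather than along generations.
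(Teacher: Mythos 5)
Your outline follows essentially the same strategy as the paper: a $\delta$-adapted antichain of words with comparable widths, the covering count $N_\delta\asymp \delta^{-1}\sum h_\omega$ (the paper's Lemma on counting boxes is exactly your two-sided bound, including the observation that a $\delta$-square meets at most $O(1)$ strips), the martingale $Y_n=\sum_{\omega\in\mathcal{I}_n}h_\omega l_\omega^{s-1}$ normalized by \eqref{eq:box-dimension-equation}, positivity of its limit, and Borel--Cantelli along a geometric sequence of scales. The differences are in how the two arguments get through the technical core. For non-degeneracy you invoke the Kahane--Peyri\`ere/Biggins $L\log L$ criterion plus the smoothing-transform fixed point; the paper instead proves $\sup_n\EE(Y_n^2)<\infty$ directly, which is cheap here because the one-step weights are bounded and $\alpha:=\EE\big(\sum_i a_i^2 l_i^{2(s-1)}\big)<1$, and then gets $Y_\infty>0$ a.s.\ from the fixed-point relation $p=p^m$ (note your $L\log L$ condition by itself only gives $\EE Y_\infty=1$; you still need that zero--one step). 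More importantly, the obstacle you flag at the end --- passing from the fixed-level martingale limit to an almost-sure two-sided bound on the stopped sums $X_n=\sum_{\omega\in Q_n}h_\omega l_\omega^{s-1}$ --- is precisely where the paper does its heaviest lifting: optional stopping gives only $\EE(X_n\mid\mathscr{F}_n)=Y_n$, which suffices for your first-moment/Markov upper bound but not for the lower bound. The paper closes this by a second-moment computation, conditioning on $\mathscr{F}_n$ and splitting pairs of words in $Q_n$ by their last common ancestor, to obtain
\[
\EE\big[(X_n-Y_n)^2\big]\le C\alpha^n ,
\]
whence $X_n-Y_n\to0$ a.s.\ by Chebyshev and Borel--Cantelli, and $X_n$ is eventually sandwiched between positive random constants. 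So your plan is sound and would work, but to be a complete proof you would need either this explicit variance estimate on the stopping line or a citation to a stopping-line convergence theorem for uniformly integrable additive martingales (Biggins--Kyprianou); the bounded ``overshoot'' factor $l_\omega/\delta\in[\min_i l_i,1]$ that you worry about is indeed harmless, and your reduction to geometric scales $\delta_k=\eta^k$ is the standard one the paper also uses.
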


%%%%%%%%%%%%%%%%%%%%%%%%%%%%%%%%%%%%%%%%%%%%%%%%%%%%%%%%%%%%%%%%%%%%%%%%%
%                                                                       %
%                  Examples and Calculations                            %
%                                                                       %              
%%%%%%%%%%%%%%%%%%%%%%%%%%%%%%%%%%%%%%%%%%%%%%%%%%%%%%%%%%%%%%%%%%%%%%%%%

We end this section with two examples illustrating Theorems \ref{diff thm} and \ref{box thm}.

%\begin{example}
%{\rm
%Let $b_0=0,b_1=\frac{2}{5},b_2=\frac{3}{5},$ and $b_3=1$, so $l_0=l_2=\frac25$ and $l_1=\frac15$. If $y_1$ and $y_2=1-y_1$ are chosen with Beta(1,2) distribution on $[0,1]$, then the random function $F$ is almost surely differentiable almost everywhere, with box-counting dimension approximately $1.128$. The calculations are as follows: first,
%$$\EE(\log a_0)=\EE(\log a_2)=2\int_0^1\log(x)(1-x)dx=-\frac{3}{2},$$
%and
%$$\EE(\log a_1)=2\int_0^1\log|1-2x|(1-x)dx=-1.$$
%Then compute $\varphi\approx -0.3451<0$.
%\begin{align*}
%    \sum_{i=0}^{m-1}l_i(\EE \log a_i-\log l_i) & \approx -0.3451<0.
%\end{align*}
%So by Theorem \ref{diff thm}, $F$ is almost surely differentiable at almost every $x\in(0,1)$.

%In order to apply Theorem \ref{box thm}, we first calculate $\EE(a_0)=\EE(a_2)=1/3$ (since $y_2=1-y_1$), and 
%$$\EE(a_1)=\EE(|1-2y_1|)=2\int_0^1|1-2x|(1-x)dx=\frac{1}{2}.$$
%Solving \eqref{eq:box-dimension-equation} for $s$ gives $\dim_B \mathrm{Graph}(F)=s\approx1.128$.
%}
%\end{example}

\begin{example}
{\rm
Let $b_0=0, b_1=\frac{2}{5}, b_2=\frac{3}{5}$, and $b_3=1$, so $l_0=l_2=\frac25$ and $l_1=\frac15$. If $y_1$ is chosen with Beta(2,1) distribution on $[0,1]$, and we set $y_2=1-y_1$, then the random function $F$ is almost surely non-differentiable almost everywhere, with box-counting dimension approximately $1.561$. The calculations are as follows:

First, $y_1$ has density function $2x$ for $0\le x\le1$. Therefore, since $a_0=y_1$, $a_2=1-y_2=y_1$, and $a_1=|y_2-y_1|=|1-2y_1|$, we have
$$\EE(\log a_0)=\EE(\log a_2)=2\int_0^1 x\log x dx=-\frac{1}{2}$$
and
$$\EE(\log a_1)=2\int_0^1 x\log|1-2x|dx=-1.$$
Then compute $\varphi=\sum_{i=0}^{m-1}l_i(\EE\log a_i-\log l_i)\approx 0.455>0$.
So by Theorem \ref{diff thm}, $F$ is almost surely non-differentiable at almost every $x\in(0,1)$.

In order to apply Theorem \ref{box thm}, we first calculate $\EE(a_0)=\EE(a_2)=2/3$ and 
$$\EE(a_1)=\EE|1-2y_1|=2\int_0^1 x|1-2x|dx=\frac{1}{2}.$$
Solving \eqref{eq:box-dimension-equation} for $s$ gives $\dim_B \mathrm{Graph}(F)=s\approx1.561$.
}
\end{example}

Note that it is usually not possible to give a closed form expression for the box-counting dimension. An exception is when the $x$-axis intervals are all the same length, as shown below.

\begin{example}
{\rm
If $b_i=i/m$ for $i=0,1,\dots,m$, and $y_1,\dots,y_{m-1}$ are chosen independently according to the uniform distribution on $[0,1]$, then $F$ is almost surely differentiable almost everywhere and
\begin{equation} \label{eq:evenly-spaced-box-dimension}
\dim_B \Graph(F)=1+\frac{\log(m+1)-\log 3}{\log m}
\end{equation}
almost surely. The calculations are as follows: First, $\EE(a_0)=\EE(a_{m-1})=\frac{1}{2}$, while
$$\EE(a_i)=\int_0^1\int_0^1|x-y|dxdy=\frac{1}{3}$$ for all $1\le i\le m-2$. Thus,
    $$\EE\left(\sum_{i=0}^{m-1}a_i l_i^{s-1}\right)=m^{1-s}\Big(\frac{m+1}{3}\Big).$$
Applying Theorem \ref{box thm} yields \eqref{eq:evenly-spaced-box-dimension}.
For the differentiablity of $F$, we have that 
$$\varphi=\sum_{i=0}^{m-1}l_i(\EE \log a_i-\log l_i)=\frac{2\log m-3m+2}{2m}<0 \quad\forall m\in\NN.$$
By Theorem \ref{diff thm}, $F$ is almost surely differentiable almost everywhere.
}
\end{example}

\section{Proof of Theorem \ref{diff thm}} \label{sec:differentiability}

%%%%%%%%%%%%%%%%%%%%%%%%%%%%%%%%%%%%%%%%%%%%%%%%%%%%%%%%%%%%%%%%%%%%%%%%%
%                                                                       %
%             NEW SECTION: Differentiability Theorem                    %
%                                                                       %              
%%%%%%%%%%%%%%%%%%%%%%%%%%%%%%%%%%%%%%%%%%%%%%%%%%%%%%%%%%%%%%%%%%%%%%%%%

Throughout, let $F$ be a random self-affine function as constructed in the previous section. The following is an immediate consequence of the construction:

\begin{lemma} \label{initial set up}
For every finite subset $A\subseteq\mathcal{I}^*$, the random variables $\{a_\omega:\omega\in A\}$ are mutually independent provided no two words in $A$ share the same parent. Furthermore, if $\omega=(\omega_1,\dots,\omega_n)\in\mathcal{I}^*$, then $a_\omega$ has the same distribution as $a_{\omega_n}$. Consequently
$$\EE(a_\omega)=\EE(a_{\omega_n}).$$
\end{lemma}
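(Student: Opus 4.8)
The plan is to trace both assertions directly back to the construction of the random affine maps $T_{\omega i}$ and the structure of the symbolic space $\mathcal{I}^*$. For the independence claim, I would first unpack the definition $a_\omega = |y_{\omega'} - y_\omega|$ in terms of the vector $\mathbf{Y}_{\omega|_{n-1}}$: since $y_\omega$ and $y_{\omega'}$ are both coordinates of (or, for the maximal word, a deterministic extension of) the single random vector $\mathbf{Y}_{\omega|_{n-1}}$ attached to the parent $\omega|_{n-1}$, the random variable $a_\omega$ is a (measurable) function of $\mathbf{Y}_{\omega|_{n-1}}$ alone. The key observation is then that for a finite set $A$ in which no two words share a parent, the corresponding parent words $\{\omega|_{|\omega|-1} : \omega \in A\}$ are all distinct; since $\{\mathbf{Y}_\tau : \tau \in \mathcal{I}^*\}$ is a family of \emph{independent} random vectors by hypothesis, the $\sigma$-algebras generated by the various $\mathbf{Y}_{\omega|_{|\omega|-1}}$ are independent, and hence so are the functions $a_\omega$ of them. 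I would phrase this as: the map $\omega \mapsto \omega|_{|\omega|-1}$ is injective on $A$, so $\{a_\omega : \omega \in A\}$ is a family of functions of distinct members of an independent family, and the grouping lemma for independence applies.

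For the distributional claim, fix $\omega = (\omega_1,\dots,\omega_n)$ with parent $\tau = \omega|_{n-1}$ and child-index $\omega_n = i$. One needs to identify what $\omega'$ is: the lexicographically next word of length $n$. If $i < m-1$, then $\omega' = (\omega_1,\dots,\omega_{n-1}, i+1)$, so $y_{\omega'} = y_{\tau(i+1)}$ and $y_\omega = y_{\tau i}$, whence $a_\omega = |y_{\tau(i+1)} - y_{\tau i}|$, which has exactly the distribution of $|y_{i+1} - y_i|$ because $\mathbf{Y}_\tau \overset{d}{=} \mathbf{Y}_\varnothing$; and $|y_{i+1}-y_i|$ is precisely $a_i$ (using the conventions $y_{\tau 0}=0$, $y_{\tau m}=1$ when $i=0$ or $i=m-1$ with $i+1=m$... wait, let me be careful). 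Actually I would split into the three cases $i=0$, $1\le i\le m-2$, and $i=m-1$, matching the conventions $y_{\tau 0}:=0$ and $y_{\tau m}:=1$, and in the last case also the global convention $y_{\omega'}:=1$ when $\omega$ is maximal of length $n$ — but note $\omega$ need not be maximal even when $\omega_n = m-1$, so one must check that $y_{\omega'}$ still equals $1$ in the relevant sense; in fact when $\omega_n = m-1$ but $\omega$ is not maximal, $\omega'$ is obtained by carrying over, and one should verify $a_\omega = |1 - y_{\tau(m-1)}|$, which is consistent because $y_{\tau m} := 1$. In each case $a_\omega$ is the same fixed measurable function of $\mathbf{Y}_\tau$ as $a_i$ is of $\mathbf{Y}_\varnothing$, and since $\mathbf{Y}_\tau \overset{d}{=} \mathbf{Y}_\varnothing$, we get $a_\omega \overset{d}{=} a_i = a_{\omega_n}$. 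The equality of expectations $\EE(a_\omega) = \EE(a_{\omega_n})$ is then immediate.

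I expect the main obstacle — really the only delicate point — to be the bookkeeping around the word $\omega'$ and the boundary conventions: one must confirm that for \emph{every} $\omega$ of length $n$ (maximal or not, and whether or not $\omega_n = m-1$), the quantity $a_\omega = |y_{\omega'} - y_\omega|$ can be written as a function depending only on the parent vector $\mathbf{Y}_{\omega|_{n-1}}$, and that this function is the $\omega_n$-th one in the list $(a_0,\dots,a_{m-1})$. The potential trap is the maximal word, where $y_{\omega'}$ is defined globally as $1$ rather than as a coordinate of some $\mathbf{Y}_\tau$; but since we also set $y_{\tau m} := 1$ deterministically for every $\tau$, the formula $a_\omega = |y_{\tau m} - y_{\tau(m-1)}|$ holds and still depends only on $\mathbf{Y}_\tau$. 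Once this case analysis is cleanly dispatched, both the independence (via distinctness of parents plus the grouping property of independent families) and the distributional identity (via the i.i.d.\ assumption on $\{\mathbf{Y}_\omega\}$) follow with no further work.
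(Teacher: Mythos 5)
Your argument is correct and is exactly the content the paper leaves implicit: the paper offers no proof, calling the lemma ``an immediate consequence of the construction,'' and your route --- $a_{\tau i}$ is the fixed measurable function $|y_{\tau(i+1)}-y_{\tau i}|$ of the single parent vector $\mathbf{Y}_\tau$, so distinct parents give independence and the i.i.d.\ assumption on $\{\mathbf{Y}_\tau\}$ gives the distributional identity --- is the intended one. You also correctly isolate the one genuinely delicate point (the case $\omega_n=m-1$, where $a_\omega$ must be read as $|y_{\tau m}-y_{\tau(m-1)}|=|1-y_{\tau(m-1)}|$ via the convention $y_{\tau m}:=1$ rather than via the literal lexicographic successor), so nothing is missing.
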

%with a given partition $0=b_0<b_1<\dots<b_{m-1}<b_m=1$.

We first prove that the only possible finite derivative of $F$ is zero:

%%%%%%%%%%%%%%%%%%%%%%%%%%%%%%%%%%%%%%%%%%%%%%%%%%%%%%%%%%%%%%%%%%%%%%%%%
%                                                                       %
%             F'(x) Exists Implies F'(x)=0 Proposition                  %
%                                                                       %              
%%%%%%%%%%%%%%%%%%%%%%%%%%%%%%%%%%%%%%%%%%%%%%%%%%%%%%%%%%%%%%%%%%%%%%%%%

\begin{proposition} \label{prop:easy}
If $F$ is differentiable at $x$, then almost surely $F'(x)=0.$
\end{proposition}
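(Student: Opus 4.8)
The plan is to show that at a point $x$ of differentiability, the derivative cannot be a nonzero real number, by exploiting the self-affine structure together with assumption \eqref{eq:not-diagonal}. Fix the infinite word $\omega\in\mathcal{I}$ coding $x$ (with the convention of Proposition \ref{prop:cont}). For each $n$, the rectangle $R_{\omega|_n}$ has base $[b_{\omega|_n},b_{(\omega|_n)'}]$ of width $l_{\omega|_n}$ and height $h_{\omega|_n}$, and the graph of $F$ restricted to this base is an affinely rescaled copy of the graph of an independent random self-affine function generated by the descendants of $\omega|_n$. If $F'(x)=c$ exists and is finite, then looking at the two endpoints $b_{\omega|_n}$ and $b_{(\omega|_n)'}$ of the base interval containing $x$, the slope of the chord across $R_{\omega|_n}$,
$$\frac{F(b_{(\omega|_n)'})-F(b_{\omega|_n})}{l_{\omega|_n}}= \pm\frac{h_{\omega|_n}}{l_{\omega|_n}},$$
must converge to $c$ as $n\to\infty$ (since both endpoints converge to $x$ and differentiability forces the chord slopes through $x$ to converge to $c$; one must be slightly careful when $x$ is itself an endpoint $b_{\omega|_n}$, but then one uses the neighbouring interval on the appropriate side, or the fact that $x$ has another coding). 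In particular $h_{\omega|_n}/l_{\omega|_n}\to |c|$.

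Next I would use the one-level recursion. Writing $n+1$ in place of $n$ and using $h_{\omega|_{n+1}} = a_{\omega|_{n+1}} h_{\omega|_n}$, $l_{\omega|_{n+1}} = l_{\omega_{n+1}} l_{\omega|_n}$, we get
$$\frac{h_{\omega|_{n+1}}}{l_{\omega|_{n+1}}} = \frac{a_{\omega|_{n+1}}}{l_{\omega_{n+1}}}\cdot\frac{h_{\omega|_n}}{l_{\omega|_n}}.$$
If $c\neq 0$ then both ratios on the left and in the last factor tend to the same nonzero limit $|c|$, forcing $a_{\omega|_{n+1}}/l_{\omega_{n+1}}\to 1$, i.e. $a_{\omega|_{n+1}} - l_{\omega_{n+1}}\to 0$. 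Now I claim this is an almost-sure impossibility for $\lambda$-almost every $x$, hence the proposition. The point is that the events ``$|a_{\omega|_{n+1}} - l_{\omega_{n+1}}| < \eps$'' involve, for a \emph{fixed} word $\omega$, independent random variables along the branch (successive $a_{\omega|_k}$ have distinct parents, so Lemma \ref{initial set up} applies), and assumption \eqref{eq:not-diagonal} says precisely that for each symbol $i$, $\PP(a_i = l_i) = \PP(y_{i+1}-y_i = b_{i+1}-b_i) < 1$; combined with $\PP(y_i\in\{0,1\})=0$ one can choose $\eps>0$ and a symbol $i$ with $\PP(|a_i - l_i|\geq\eps)>0$. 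By Borel--Cantelli along the infinitely many indices $k$ with $\omega_k = i$ (which, for $\lambda$-a.e.\ $x$, occur for every symbol infinitely often), $|a_{\omega|_k} - l_{\omega_k}|\geq\eps$ infinitely often, almost surely — contradicting $a_{\omega|_{n}}-l_{\omega_n}\to 0$. Since differentiability at $x$ was assumed, we conclude $F'(x)=0$.

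A cleaner route that avoids fixing a single $x$: prove the statement for \emph{Lebesgue-almost every} $x$ first (which is all that is needed downstream, and matches the phrasing of Theorem \ref{diff thm}), then note the argument in fact gives it at every $x$ of differentiability by the following refinement. For each finite word $\tau$ and each $\eps>0$, on the event that $|a_{\tau i} - l_i| < \eps$ fails for some child direction the set of bad branches has full measure; but to get the statement at \emph{every} differentiability point simultaneously one instead observes that on the (probability-one) event where \eqref{eq:max-height} holds and, additionally, for every $\tau$ there is \emph{some} $i$ with $|a_{\tau i}-l_i|\geq\eps_0$ (a fixed constant depending on the law, guaranteed a.s.\ since these are finitely many independent vectors per level... one needs a slightly more careful argument here because there are infinitely many $\tau$). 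I expect this last point — upgrading ``$\lambda$-a.e.\ $x$'' to ``every $x$ where $F$ is differentiable'', uniformly in $\theta$ — to be the main technical obstacle; the natural fix is to argue that for a fixed branch $\omega$ the ratio $a_{\omega|_n}/l_{\omega_n}$ is a product of i.i.d.-type factors that cannot converge to $1$ unless each factor does, and each factor is $1$ with probability $<1$, so the convergence fails a.s.\ for that branch, and then to handle the (measure-zero, but possibly nonempty) set of exceptional branches by noting a differentiability point lying on such a branch still has chord slopes controlled by the neighbouring ``good'' branch on one side.
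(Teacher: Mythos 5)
Your overall strategy---the chord slopes across the nested rectangles $R_{\omega|_n}$ converge to $F'(x)$, so a finite nonzero derivative forces the one-level ratios to tend to $1$, which independence along the branch rules out almost surely---is exactly the paper's. But two steps have genuine gaps.

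First, you discard the sign too early. Writing $\tilde{a}_i:=y_{i+1}-y_i$, assumption \eqref{eq:not-diagonal} is $\PP(\tilde{a}_i=l_i)<1$; this does \emph{not} imply $\PP(a_i=l_i)<1$, since $a_i=|\tilde{a}_i|$. For instance, with $m=3$, $b_i=i/3$, $y_1$ uniform on $(1/3,2/3)$ and $y_2=y_1\pm 1/3$ with equal probability, all of the paper's hypotheses hold and yet $a_1=l_1$ almost surely. Along a branch carrying such a symbol your unsigned ratio $a_{\omega|_k}/l_{\omega_k}$ is identically $1$ and yields no contradiction, whereas the signed ratio $\tilde{a}_{\omega|_k}/l_{\omega_k}=\pm1$ oscillates and does. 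The paper keeps the sign throughout: $F'(x)=c\neq0$ forces $\tilde{a}_{\omega|_n}/l_{\omega_n}\to1$, to which \eqref{eq:not-diagonal} applies directly for every symbol. Second, your Borel--Cantelli step fixes one non-degenerate symbol $i$ and needs it to recur infinitely often in $\omega(x)$, which you only obtain for $\lambda$-a.e.\ $x$; but the proposition concerns an arbitrary fixed $x$ (e.g.\ one coded by $000\cdots$), with the exceptional $\theta$-set allowed to depend on $x$. The paper needs no recurrence hypothesis: it sets $p:=\max_i\PP\big(l_i(1-\eps)\le\tilde{a}_i\le l_i(1+\eps)\big)<1$ uniformly over all symbols and bounds the probability of the convergence event by $\lim_{n}p^n=0$, using independence of the $\tilde{a}_{\omega|_k}$ along the branch. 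Your closing paragraph then attacks the wrong difficulty: no uniformity over all differentiability points simultaneously is claimed or needed, so the ``upgrade'' you describe is a red herring; what actually needs repairing are the two points above. (Your worry about $x$ being an endpoint of a basic interval is also unnecessary: for $a\le x\le b$ with $b-a\to0$, the straddle estimate $|f(b)-f(a)-f'(x)(b-a)|=o(b-a)$ covers endpoints as well.)
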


\begin{proof}
Suppose $F$ is differentiable at $x$, and let $\omega\in\mathcal{I}$ be the coding of $x$. Letting $x_n=b_{\omega|_n}$ and $x_n'=b_{(\omega|_n)'}$, we then have
 \begin{equation*}
     F'(x) =\lim_{n\to\infty}\frac{F(x_n')-F(x_n)}{x_n'-x_n} %=\lim_{n\to\infty} \frac{h_{\omega|_n}}{l_{\omega|_n}}
     =\lim_{n\to\infty} \prod_{i=1}^n\frac{\tilde{a}_{\omega|_i}}{l_{\omega_i}},
 \end{equation*}
where we write $\tilde{a}_{\omega|_i}:=y_{(\omega|_i)'}-y_{\omega|_i}$, so $a_{\omega|_i}=|\tilde{a}_{\omega|_i}|$.
 If this last limit is finite and nonzero, then $\lim_{n\to\infty}\frac{\tilde{a}_{\omega|_n}}{l_{\omega_n}}=1.$ This means for every $\eps>0$, there exists an $N$ such that $n\ge N$ implies $l_{\omega_{n}}(1-\eps)\le \tilde{a}_{\omega|_n}\le l_{\omega_{n}}(1+\eps)$.
Fix $\eps>0$ small enough so that $\PP\big(l_{i}(1-\eps)\le \tilde{a}_{i}\le l_{i}(1+\eps)\big)<1$ for $i=0,\dots,m-1$. This can be done since every $\tilde{a}_i$ is not identically $l_i$ in view of the assumption \eqref{eq:not-diagonal}.
Put
\[
p:=\max\big\{\PP\big(l_{i}(1-\eps)\le \tilde{a}_{i}\le l_{i}(1+\eps)\big): i=0,\dots,m-1\big\}. 
\]
Then by identical distribution
 $$\PP\big(l_{\omega_{n}}(1-\eps)\le \tilde{a}_{\omega|_n}\le l_{\omega_{n}}(1+\eps)\big)\le p<1$$
 for all $n\in\NN.$
 Since the random variables $\tilde{a}_{\omega|_i}$, $i\in\NN$ are independent, we have that 
 \begin{equation*}
     P\big(F'(x)\ \mbox{exists and}\ \ne0\big) \le P\left(\lim_{n\to\infty}\frac{\tilde{a}_{\omega|_n}}{l_{\omega_n}}=1\right)
      \le \lim_{n\to\infty}p^n = 0,
 \end{equation*}
completing the proof.
\end{proof}

\begin{remark}
{\rm
By using a stochastic version of the careful argument in the proof of \cite[Proposition 2.1]{allaart2}, the conclusion of Proposition \ref{prop:easy} may be obtained under the weaker assumption that $\PP(y_i=b_i)<1$ for at least one index $i\in\{1,\dots,m-1\}$, instead of \eqref{eq:not-diagonal}. This involves additional technicalities, however.
}
\end{remark}

We now begin work on proving Theorem \ref{diff thm}.
Note that the lengths $(l_0,\dots,l_{m-1})$ form a probability vector $\Vec{p}$. As in Proposition \ref{prop:cont}, each $x\in[0,1]$ may be identified by an infinite word $\omega(x)\in\mathcal{I}$. Let $C_n^{(i)}(x)$ count the number of times $i$ appears in the first $n$ letters of $\omega(x)$, that is
$$C_n^{(i)}(x):=\#\{k\le n:\omega(x)_k=i\}.$$
We say that $x$ is $\Vec{p}$-$normal$ if for all $i=0,1,\dots,m-1$
\begin{equation} \label{eq:p-normal}
\lim_{n\to\infty}\frac{C_n^{(i)}(x)}{n}=l_i.
\end{equation}

%%%%%%%%%%%%%%%%%%%%%%%%%%%%%%%%%%%%%%%%%%%%%%%%%%%%%%%%%%%%%%%%%%%%%%%%%
%                                                                       %
%               Almost All Numbers are P-Normal Lemma                   %
%                                                                       %              
%%%%%%%%%%%%%%%%%%%%%%%%%%%%%%%%%%%%%%%%%%%%%%%%%%%%%%%%%%%%%%%%%%%%%%%%%

\begin{lemma} \label{lem:almost-all-normal}
Pick an $x\in[0,1]$ randomly according to $\lambda$. Then 
$$\lambda(\{x:x\text{ is }\Vec{p}\text{-normal}\})=1.$$
\end{lemma}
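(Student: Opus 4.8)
The plan is to recognize this as essentially the strong law of large numbers applied to the base-expansion map, and to reduce the statement about Lebesgue measure directly to a standard normal-numbers argument. The key observation is that the map $x\mapsto\omega(x)$ pushes Lebesgue measure $\lambda$ on $[0,1]$ forward to the Bernoulli measure $\mu=\vec{p}^{\,\NN}$ on $\mathcal{I}=\{0,\dots,m-1\}^\NN$: indeed, by the definitions of $b_\omega$ and $l_\omega$, for any finite word $\omega\in\mathcal{I}_n$ the cylinder $[\omega]=\{x:\omega(x)|_n=\omega\}$ is (up to a countable set of endpoints, which is $\lambda$-null) the interval $[b_\omega,b_{\omega'}]$, and $\lambda([b_\omega,b_{\omega'}])=l_\omega=\prod_{i=1}^n l_{\omega_i}=\prod_{i=1}^n p_{\omega_i}$. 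So $\lambda([\omega])=\mu([\omega])$ for all cylinders, and since cylinders generate the Borel $\sigma$-algebra, $\lambda$ is the pullback of $\mu$.

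First I would make this pushforward identity precise, taking a moment to note that the ambiguity in $\omega(x)$ occurs only at the countably many points $x$ admitting two codings (those of the form $b_\omega$), which form a $\lambda$-null set and can therefore be discarded. Next, I would apply the strong law of large numbers under $\mu$: writing $X_k(\omega):=\mathbf{1}_{\{\omega_k=i\}}$, under the Bernoulli measure $\mu$ the $X_k$ are i.i.d.\ with mean $\mu(\omega_k=i)=p_i=l_i$, so by the SLLN, $\mu$-almost every $\omega$ satisfies $\frac1n\sum_{k=1}^n X_k(\omega)\to l_i$, i.e.\ $C_n^{(i)}(\omega)/n\to l_i$. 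Taking the (finite) intersection over $i=0,1,\dots,m-1$, the set $G$ of $\omega\in\mathcal{I}$ for which \eqref{eq:p-normal} holds for every $i$ has $\mu(G)=1$. Pulling back through $x\mapsto\omega(x)$ then gives $\lambda(\{x:x\text{ is }\vec{p}\text{-normal}\})=\lambda(\omega^{-1}(G))=\mu(G)=1$, as desired.

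I do not expect a genuine obstacle here — this is a routine reduction to Borel's normal number theorem in base $m$ with unequal digit probabilities. The only point requiring a little care is the bookkeeping around the non-uniqueness of codings and the convention (stated in Proposition \ref{prop:cont}) of choosing the coding ending in zeros; one should confirm that this choice is measurable and that the exceptional set where it matters is $\lambda$-null, so that the measure-theoretic identification of $\lambda$ with the Bernoulli measure goes through cleanly. Everything else is immediate from the definitions of $b_\omega$ and $l_\omega$ together with the classical SLLN.
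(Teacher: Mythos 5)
Your proposal is correct and follows essentially the same route as the paper: both arguments rest on the fact that, under $\lambda$, the coding digits are i.i.d.\ with distribution $\vec{p}$ (which you verify explicitly via the cylinder identity $\lambda([b_\omega,b_{\omega'}])=\prod_i l_{\omega_i}$, while the paper calls it a straightforward exercise), followed by the strong law of large numbers for each digit $i$ and a finite intersection. Your extra care about the countable set of doubly-coded points is a fine, if minor, addition.
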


\begin{proof}
Let $\omega\in\mathcal{I}$ be the coding for $x$. Fix an $i\in\{0,1,\dots,m-1\}$ and consider the random variables
$$X_k^{(i)}=\begin{cases}
1 & \text{ if }\omega_k=i\\
0 & \text{ otherwise}.
\end{cases}
$$
A straightforward exercise shows that $\{X_k^{(i)}\}_{k\in\NN}$ is a sequence of independent identically distributed random variables with mean $l_i$ with respect to $\lambda$. By the strong law of large numbers,
$$\frac{C_n^{(i)}(x)}{n}=\frac{1}{n}\sum_{k=1}^nX_k^{(i)}\xrightarrow[]{a.s.}l_i.$$
But as this holds for all $i$, $x$ is almost surely $\Vec{p}$-normal.
\end{proof}

Next, for $x\in[0,1]$, define 
$$M_n(x)=\inf\{k>n:\omega(x)_k=\omega(x)_n\}.$$
Basically, $M_n(x)$ records the very next time we see the letter $\omega(x)_n$ again in the coding for $x$. We will make use of the following lemma, whose proof is a slight modification of an argument given in \cite{lax}.

%%%%%%%%%%%%%%%%%%%%%%%%%%%%%%%%%%%%%%%%%%%%%%%%%%%%%%%%%%%%%%%%%%%%%%%%%
%                                                                       %
%          M_n(x) Grows Linearly for P-Normal x Lemma                   %
%                                                                       %              
%%%%%%%%%%%%%%%%%%%%%%%%%%%%%%%%%%%%%%%%%%%%%%%%%%%%%%%%%%%%%%%%%%%%%%%%%

\begin{lemma} \label{lax lemma}
If $x$ is $\Vec{p}$-normal, then 
$$M_n(x)=n+o(n) \qquad\mbox{as $n\to\infty$}.$$
\end{lemma}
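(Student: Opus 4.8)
The plan is to show that for a $\vec p$-normal point $x$, the quantity $M_n(x) - n$ grows sublinearly. Fix $i = \omega(x)_n$; since the gap $M_n(x) - n$ is the distance to the next occurrence of the letter $i$ at or after position $n$, it suffices to bound, for each fixed letter $i$, the maximal gap between consecutive occurrences of $i$ among the first $n$ positions. Concretely, let $n_1 < n_2 < \cdots$ enumerate the positions $k$ with $\omega(x)_k = i$; by $\vec p$-normality we have $C_n^{(i)}(x)/n \to l_i > 0$, so in particular letter $i$ occurs infinitely often and $n_j/j \to 1/l_i$. I would first observe that consecutive differences of a sequence whose $j$-th term is asymptotic to $cj$ must be $o(j)$ — more precisely, if $n_j \sim j/l_i$, then $n_{j+1} - n_j = n_{j+1} - n_j = o(n_j)$, because otherwise there would be a subsequence along which $n_{j+1} - n_j \geq \delta n_j$ for some $\delta > 0$, and comparing $n_{j+1}/(j+1)$ with $n_j/j$ would contradict convergence to the common limit $1/l_i$. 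This gives that the gap to the next occurrence of letter $i$ is $o(\text{current position})$.

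Next I would translate this into the statement about $M_n(x)$. Given $n$, let $i = \omega(x)_n$ and let $n_j \le n < n_{j+1}$ be consecutive occurrences of $i$ with $n_j \le n$ (so $n$ lies in the block before the $(j+1)$-st occurrence, unless $\omega(x)_n = i$ in which case $n = n_j$). Then $M_n(x) \le n_{j+1}$, so $M_n(x) - n \le n_{j+1} - n_j = o(n_j) = o(n)$, using that $n_j \le n$ and that for each of the finitely many letters $i$ the bound $n_{j+1} - n_j = o(n_j)$ holds. Taking the maximum over the $m$ letters preserves the $o(n)$ estimate since $m$ is fixed. Hence $M_n(x) = n + o(n)$ as claimed.

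The step I expect to be the main (though still modest) obstacle is making the passage from ``$n_j \sim j/l_i$'' to ``$n_{j+1} - n_j = o(j)$'' fully rigorous and uniform in $i$: one has to rule out large sporadic gaps using only the Cesàro-type information $C_n^{(i)}(x)/n \to l_i$ rather than a gap-by-gap estimate, and then patch the bounds for the $m$ different letters together. A clean way to phrase it is: for any $\eps > 0$, $\vec p$-normality gives $N$ such that $|C_n^{(i)}(x) - l_i n| < \eps n$ for all $n \ge N$ and all $i$; then for $n \ge N$ and $i = \omega(x)_n$, on the interval $[n+1, n + M_n(x) - n]$ the count $C^{(i)}$ does not increase until the last point, so comparing the count at $n$ and at $M_n(x)$ forces $M_n(x) - n \le \tfrac{2\eps}{l_i - \eps}\, n + O(1)$, which is $O(\eps n)$ uniformly in $i$ once $\eps < \min_i l_i$. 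Letting $\eps \to 0$ yields $M_n(x) - n = o(n)$.
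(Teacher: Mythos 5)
Your proposal is correct and follows essentially the same route as the paper: for each fixed letter $i$, $\vec p$-normality of the occurrence counts at consecutive occurrence positions forces the gap to the next occurrence of $i$ to be $o(n)$, and the finitely many letters are then combined. The paper phrases this as the short limit computation based on $C^{(i)}_{M_{n_k}}=C^{(i)}_{n_k}+1$ together with $C^{(i)}_n/n\to l_i$, which is just the limit form of your quantitative $\eps$-estimate.
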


\begin{proof}
Let $\omega\in\mathcal{I}$ be the coding for $x$. Take $i\in\mathcal{I}_1$ and let $n_k$ be the position of the $k$th occurrence of the digit $i$ in $\omega$. Write $M_{n_k}(x)=M_{n_k}$ and $C_{n_k}^{(i)}(x)=C_{n_k}^{(i)}$ for short.  
Then $C_{M_{n_k}}^{(i)}=C_{n_k}^{(i)}+1,$ since $M_n(x)$ denotes the first reappearance of $\omega_n$. This means
\begin{equation*}
    l_i = \lim_{k\to\infty}\frac{C_{n_k}^{(i)}+1}{n_k}
    = \lim_{k\to\infty}\frac{C_{M_{n_k}}^{(i)}}{M_{n_k}}\frac{M_{n_k}}{n_k}
    = l_i\lim_{k\to\infty}\frac{M_{n_k}}{n_k},
\end{equation*}
which in turn implies $M_{n_k}/n_k\to1$. However, this was true for each $i=0,1,\dots,m-1$, and therefore $M_n/n\to1$ as well.
\end{proof}

If $\omega(x)$ admits a long string of $0$'s  or $(m-1)$'s, then the smallest basic interval containing both $x$ and $x+\delta$ can be quite large, even when $\delta$ is very small. The next lemma deals with this issue.

%%%%%%%%%%%%%%%%%%%%%%%%%%%%%%%%%%%%%%%%%%%%%%%%%%%%%%%%%%%%%%%%%%%%%%%%%
%                                                                       %
%     P-Normal Numbers Can't Have Too Many 0's or (m-1)'s Lemma         %
%                                                                       %              
%%%%%%%%%%%%%%%%%%%%%%%%%%%%%%%%%%%%%%%%%%%%%%%%%%%%%%%%%%%%%%%%%%%%%%%%%

\begin{lemma} \label{p normal}
Let $x$ be $\Vec{p}$-normal with coding $\omega\in\mathcal{I}$. For $|\delta|<1$ let $n(\delta)$ be the largest $n$ such that both $x$ and $x+\delta$ are contained in the basic interval $[b_{\omega|_n},b_{(\omega|_n)'}]$ of length $l_{\omega|_n}$. Note that $n(\delta)\to\infty$ as $|\delta|\to0$. Then
$$\lim_{\delta\to 0}\Bigg(\frac{l_{\omega|_{n(\delta)}}}{|\delta|}\Bigg)^{\frac{1}{n(\delta)}}=1.$$
\end{lemma}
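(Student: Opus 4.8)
The plan is to pass to logarithms and reduce to a two-sided estimate. Write $n=n(\delta)$ and $c_0:=\min_{0\le i\le m-1}l_i\in(0,1)$, and recall that $n(\delta)\to\infty$ as $\delta\to0$. The claim is equivalent to $\frac{1}{n}\bigl(\log l_{\omega|_n}-\log|\delta|\bigr)\to0$ as $\delta\to0$, so it is enough to trap $l_{\omega|_n}/|\delta|$ between $1$ and a quantity whose $n$-th root tends to $1$.

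One direction is immediate: both $x$ and $x+\delta$ lie in the basic interval $[b_{\omega|_n},b_{(\omega|_n)'}]$ of length $l_{\omega|_n}$, so $|\delta|\le l_{\omega|_n}$, i.e.\ $l_{\omega|_n}/|\delta|\ge1$. For the other direction I would prove the geometric estimate $|\delta|\ge c_0^{\,q-n}\,l_{\omega|_n}$, where $q$ is defined as follows. By maximality of $n=n(\delta)$, $x$ and $x+\delta$ are not both in the level-$(n+1)$ basic interval $J:=[b_{\omega|_{n+1}},b_{(\omega|_{n+1})'}]$ of $x$; since $x\in J$, this forces $x+\delta\notin J$. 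Assume $\delta>0$ (the case $\delta<0$ is symmetric, with $m-1$ replaced by $0$). Since $x+\delta>x\ge b_{\omega|_{n+1}}$ and $x+\delta\notin J$, we get $x+\delta\ge b_{(\omega|_{n+1})'}$, so $|\delta|\ge b_{(\omega|_{n+1})'}-x$. Let $q>n+1$ be the first index with $\omega_q\ne m-1$ (it exists because a $\Vec{p}$-normal coding is not eventually equal to $m-1$). The digits $\omega_{n+2},\dots,\omega_{q-1}$ are all $m-1$, so they select the rightmost child at each step, and hence the level-$k$ basic interval of $x$ has right endpoint $b_{(\omega|_{n+1})'}$ for every $n+1\le k\le q-1$; the level-$q$ interval is the $\omega_q$-th child (with $\omega_q\le m-2$) of the level-$(q-1)$ interval, so it lies entirely to the left of that interval's rightmost child, which has length $l_{m-1}\,l_{\omega|_{q-1}}=l_{m-1}^{\,q-n-1}\,l_{\omega|_{n+1}}$. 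As $x$ lies in the level-$q$ interval,
\[
|\delta|\ \ge\ b_{(\omega|_{n+1})'}-x\ \ge\ l_{m-1}^{\,q-n-1}\,l_{\omega|_{n+1}}\ \ge\ c_0^{\,q-n-1}\cdot c_0\cdot l_{\omega|_n}\ =\ c_0^{\,q-n}\,l_{\omega|_n}.
\]

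It remains to check that $q-n=o(n)$, i.e.\ that the block of consecutive $(m-1)$'s just after position $n+1$ has length $o(n)$. This is where $\Vec{p}$-normality and $l_{m-1}<1$ come in: if $(q-n)/n$ did not tend to $0$, then along a subsequence the identity $C_{q-1}^{(m-1)}=C_{n+1}^{(m-1)}+(q-n-2)$ together with \eqref{eq:p-normal} would force $l_{m-1}\ge1$, a contradiction. (Equivalently, one can invoke Lemma~\ref{lax lemma}: the gaps between successive occurrences of the digit $0$ in $\omega$ are $o(n)$, and such a gap bounds the block.) The mirror statement for runs of $0$'s gives the same bound when $\delta<0$. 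Plugging $q-n=o(n)$ into the last display yields $|\delta|\ge c_0^{\,o(n)}\,l_{\omega|_n}$, hence
\[
1\ \le\ \left(\frac{l_{\omega|_n}}{|\delta|}\right)^{1/n}\ \le\ \left(\frac{1}{c_0}\right)^{o(n)/n}\ \longrightarrow\ 1,
\]
which finishes the proof.

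I expect the main obstacle to be the geometric estimate: one has to track that the nested basic intervals of $x$ all share the right endpoint of $J$ throughout a run of $(m-1)$'s (or the left endpoint, throughout a run of $0$'s, when $\delta<0$), so that the first digit $\omega_q$ that breaks the run pins $x$ a distance at least $c_0^{\,q-n}\,l_{\omega|_n}$ from the endpoint of $J$ toward which $x+\delta$ has escaped. The normality input is then a routine counting argument, and what remains is bookkeeping.
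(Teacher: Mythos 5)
Your proof is correct and follows essentially the same route as the paper's: sandwich $|\delta|$ between $l_{\omega|_{n}}$ above and $c_0^{\,r}\,l_{\omega|_{n}}$ below, where $r$ is the length of the run of $(m-1)$'s (resp.\ $0$'s) following position $n+1$, and then show $r=o(n)$ using $\Vec{p}$-normality. The only cosmetic difference is that you control the run length by directly counting occurrences of the digit $m-1$, whereas the paper bounds it by the return time $M_{n(\delta)+1}$ of the digit $\omega_{n(\delta)+1}$ and invokes Lemma~\ref{lax lemma}; both are valid, and your tracking of the distance from $x$ to the endpoint $b_{(\omega|_{n+1})'}$ makes the geometric step slightly more explicit than the paper's ``worst case'' description.
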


\begin{proof}
Let $L:=\min\{l_0,\dots,l_{m-1}\}$. We first show that
\begin{equation} \label{eq:technical-inequalities}
L^{M_{n(\delta)+1}-n(\delta)}l_{\omega|_{n(\delta)}}\le|\delta| \le l_{\omega|_{n(\delta)}},
\end{equation}
where again we write $M_n(x)=M_n$ for convenience. The second inequality is immediate from the definition of $n(\delta)$; the first is more involved.
Let $\gamma\in\mathcal{I}$ be the coding for $x+\delta$. Then $\gamma=\omega_1\dots\omega_{n(\delta)}\gamma_{n(\delta)+1}\dots$, where $\gamma_{n(\delta)+1}\ne\omega_{n(\delta)+1}$. 

First assume $\delta>0$, then since $x+\delta>x$, $\gamma>_{\text{lex}}\omega$ in the lexicographic order, implying $\omega_{n(\delta)+1}< m-1$. Since $x$ and $x+\delta$ have positive distance, there must be a basic interval between them. As discussed, the worst case scenario is when $\omega$ admits a long run of $(m-1)$'s after $\omega_{n(\delta)+1}$, while $\gamma$ has a corresponding run of $0$'s. In this situation:
$$
\begin{matrix}
\omega= & \omega_1 & \dots & \omega_{n(\delta)} & \omega_{n(\delta)+1} & m-1 & \dots& m-1 & \omega_N &\dots\\
\gamma = & \omega_1 &\dots & \omega_{n(\delta)} & \omega_{n(\delta)+1}+1 & 0 & \dots & 0 & \gamma_N & \dots,
\end{matrix}
$$
where $\omega_N\ne m-1$ marks the end of the run of $(m-1)$'s, the length of such run being $N-(n(\delta)+2)\le M_{n(\delta)+1}-n(\delta)$. Therefore
$$\delta\ge L^{M_{n(\delta)+1}-n(\delta)}l_{\omega|_n(\delta)},$$
as claimed. The argument for $\delta<0$ is symmetric. Finally, from \eqref{eq:technical-inequalities} we conclude
$$1\le\Bigg(\frac{l_{\omega|_{n(\delta)}}}{|\delta|}\Bigg)^{\frac{1}{n(\delta)}}\le \Big(L^{-M_{n(\delta)+1}+n(\delta)}\Big)^{\frac{1}{n(\delta)}}\to 1\quad\mbox{as $\delta\to 0$}$$
by Lemma \ref{lax lemma}.
\end{proof}

We are now ready to prove Theorem \ref{diff thm}.

%%%%%%%%%%%%%%%%%%%%%%%%%%%%%%%%%%%%%%%%%%%%%%%%%%%%%%%%%%%%%%%%%%%%%%%%%
%                                                                       %
%               Proof of Differntiability Thm PART 1                    %
%                                                                       %              
%%%%%%%%%%%%%%%%%%%%%%%%%%%%%%%%%%%%%%%%%%%%%%%%%%%%%%%%%%%%%%%%%%%%%%%%%

\begin{proof}[Proof of Theorem \ref{diff thm}] 
For part (1), assume that
$$\sum_{i=0}^{m-1}l_i\left(\EE\log a_i-\log l_i\right)<0,$$
and let $x$ be $\Vec{p}$-normal coded by $\omega\in\mathcal{I}$. For small $\delta$ we write $n=n(\delta)$ (where $n(\delta)$ was defined in Lemma \ref{p normal}) and consider the $n^{\text{th}}$ root of the difference quotient

$$\Big|\frac{F(x+\delta)-F(x)}{\delta}\Big|^{\frac{1}{n}}.$$
By Lemma \ref{p normal}, $l_{\omega|_n}$ will decrease at the same rate as $\delta$ when $\delta\searrow0$. Observe that
\begin{align*}
 %   \begin{split}
 %   \label{diff quotient}
    \log\Big|\frac{F(x+\delta)-F(x)}{l_{\omega|_n}}\Big|^{\frac{1}{n}} & \le \frac{1}{n}\log\frac{h_{\omega|_n}}{l_{\omega|_n}}
%    & = \frac{1}{n}\log h_{\omega|_n}-\frac{1}{n}\log l_{\omega|_n}\\
%    & = \frac{1}{n}\log\prod_{k=1}^na_{\omega|_k}-\frac{1}{n}\log\prod_{k=1}^nl_{\omega_k}.
 = \frac{1}{n}\sum_{k=1}^n\log a_{\omega|_k}-\frac{1}{n}\sum_{k=1}^n\log l_{\omega_k}\\
    & = \frac{1}{n}\sum_{i=0}^{m-1}\left(\sum_{k:\,\omega_k=i}\log a_{\omega|_k}-\sum_{k:\,\omega_k=i}\log l_{\omega_k}\right)\\
    & = \sum_{i=0}^{m-1}\frac{C_n^{(i)}(x)}{n}\left(\sum_{k:\,\omega_k=i}\frac{\log a_{\omega|_k}}{C_n^{(i)}(x)}-\log l_{i}\right).
%    \end{split}
\end{align*}
Now by independence of the $a_{\omega|_k}$'s and the strong law of large numbers,
\[
\sum_{k:\,\omega_k=i}\frac{\log a_{\omega|_k}}{C_n^{(i)}(x)}\to \EE\log a_i\quad \mbox{a.s.}
\]
Hence using \eqref{eq:p-normal} we get that almost surely
$$\lim_{\delta\to0}\log\Big|\frac{F(x+\delta)-F(x)}{\delta}\Big|^{\frac{1}{n(\delta)}}\le\sum_{i=0}^{m-1}l_i\left(\EE\log a_i-\log l_i\right)<0.$$
But this implies 
$$\lim_{\delta\to0}\Big|\frac{F(x+\delta)-F(x)}{\delta}\Big|=0 \qquad\mbox{a.s.}$$
Since Lebesgue-almost every $x$ is $\Vec{p}$-normal by Lemma \ref{lem:almost-all-normal}, an application of Fubini's theorem yields that with probability one, $F'(x)=0$ almost everywhere.

\bigskip
For part (2), assume first that 
 $$\sum_{i=0}^{m-1}l_i(\EE\log a_i-\log l_i)>0.$$
 Let $x$ be $\Vec{p}$-normal, coded by $\omega\in\mathcal{I}$, and let $x_n=b_{\omega|_n}$, $x_n'=b_{(\omega|_n)'}$. It is easy to see $F$ is not differentiable at $x$ by looking at the sequence of secant lines. In the same way as above we get
\begin{align*}
     \frac{1}{n}\log\frac{|F(x_n')-F(x_n)|}{x_n'-x_n} & =  \sum_{i=0}^{m-1}\frac{C_n^{(i)}(x)}{n}\left(\sum_{k:\omega_k=i}\frac{\log a_{\omega|_k}}{C_n^{(i)}(x)}-\log l_{i}\right)\\
     & \xrightarrow[]{n\to\infty} \sum_{i=0}^{m-1}l_i\big(\EE\log\:a_i-\log l_i\big)>0
\end{align*}
almost surely. But this implies that the difference quotients, in absolute value, increase exponentially. Therefore $F$  is not differentiable at $x$. 
 
The more difficult case is when we assume 
$$\sum_{i=0}^{m-1}l_i(\EE\log a_i-\log l_i)=0.$$
 %We recall the law of the iterated logarithm, which states that if $\{X_n\}$ a sequence of independent, identically distributed random variables having mean $0$ and variance $1$, then almost surely
 %$$\limsup_{n\to\infty}\frac{\pm S_n}{\sqrt{2n\log\log n}}=1,$$
 %where as usual $S_n=X_1+\dots+X_n$. For proof of this statement, see \cite{bill}.
The next lemma will be needed to deal with this case; see \cite[Exercises 4.1.8-4.1.11]{durrett}. 

%We provide the proof here for completeness.
 
%%%%%%%%%%%%%%%%%%%%%%%%%%%%%%%%%%%%%%%%%%%%%%%%%%%%%%%%%%%%%%%%%%%%%%%%%
%                                                                       %
%        S_n Oscillates Above and Below 0 Infinitely Often Lemma        %
%                                                                       %              
%%%%%%%%%%%%%%%%%%%%%%%%%%%%%%%%%%%%%%%%%%%%%%%%%%%%%%%%%%%%%%%%%%%%%%%%%
 
\begin{lemma} \label{oscillation lemma}
Let $\{X_n\}_{n\in\NN}$ be independent and identically distributed non-degenerate random variables with mean $0$. Let $S_n=X_1+\dots+X_n$. Then almost surely $S_n>0$ infinitely often and $S_n<0$ infinitely often.
\end{lemma}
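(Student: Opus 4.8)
The plan is to deduce the statement from the Hewitt--Savage $0$--$1$ law. First I would note that since each partial sum $S_n$ is a symmetric function of $X_1,\dots,X_n$, any permutation of finitely many of the $X_i$ changes only finitely many of the $S_n$ and therefore leaves $\limsup_n S_n$ and $\liminf_n S_n$ unchanged. Hence, for every $t\in\RR$, the event $\{\limsup_n S_n\le t\}$ is exchangeable, so by the Hewitt--Savage $0$--$1$ law it has probability $0$ or $1$; it follows that $\limsup_n S_n$ equals an a.s.\ constant $c^{+}\in[-\infty,+\infty]$, and similarly $\liminf_n S_n$ equals an a.s.\ constant $c^{-}\le c^{+}$. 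The conclusion of the lemma is precisely that $c^{+}=+\infty$ and $c^{-}=-\infty$, so it remains to rule out the other cases.

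Next I would rule out a finite value of $c^{+}$. Since $(X_2,X_3,\dots)$ has the same law as $(X_1,X_2,\dots)$, the shifted running sums $X_2+\dots+X_{n+1}$ also have a.s.\ limsup equal to $c^{+}$; writing $S_{n+1}=X_1+(X_2+\dots+X_{n+1})$ and taking $\limsup$ gives $c^{+}=X_1+c^{+}$ almost surely. If $c^{+}$ were finite this would force $X_1=0$ a.s., contradicting non-degeneracy; hence $c^{+}\in\{+\infty,-\infty\}$, and by the same argument $c^{-}\in\{+\infty,-\infty\}$. Since $c^{-}\le c^{+}$, the only surviving possibilities are $S_n\to+\infty$, $S_n\to-\infty$, or the desired case $c^{+}=+\infty$, $c^{-}=-\infty$.

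The remaining, and hardest, step is to exclude $S_n\to+\infty$ and $S_n\to-\infty$ using the hypothesis $\EE X_1=0$; by symmetry (replacing $X_i$ with $-X_i$) it suffices to exclude $S_n\to-\infty$. On that event the running supremum $W:=\sup_{n\ge0}S_n$ is a.s.\ finite and satisfies the distributional identity $W\stackrel{d}{=}(X_1+W')^{+}$, where $W'\stackrel{d}{=}W$ is independent of $X_1$. When $\EE W<\infty$ this yields $X_1+W'\ge0$ a.s.\ and, pushing the argument one more step, forces $X_1\ge0$ a.s., which together with $\EE X_1=0$ contradicts non-degeneracy; the case $\EE W=\infty$ requires a separate argument along the ascending ladder epochs. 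This is exactly the content of \cite[Exercises 4.1.8--4.1.11]{durrett} (and is equivalent to the recurrence of a non-degenerate mean-zero random walk on $\RR$, cf.\ the Chung--Fuchs theorem), so I would cite those for the details. Granting this, we are in the case $c^{+}=+\infty$, $c^{-}=-\infty$, i.e.\ $S_n>0$ infinitely often and $S_n<0$ infinitely often, which is the claim.
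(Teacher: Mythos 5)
The paper does not actually prove this lemma; it simply cites \cite[Exercises 4.1.8--4.1.11]{durrett}, so your proposal is, if anything, more detailed than the paper's own treatment. Your sketch correctly reproduces the standard route that those exercises develop --- the Hewitt--Savage $0$--$1$ law forces $\limsup_n S_n$ and $\liminf_n S_n$ to be a.s.\ constants which, for non-degenerate increments, must lie in $\{-\infty,+\infty\}$, and the mean-zero hypothesis then excludes $S_n\to\pm\infty$ --- and you defer the one genuinely technical step (ruling out drift when $\EE\bigl(\sup_n S_n\bigr)=\infty$) to the same reference, which is consistent with how the paper handles it.
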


We now complete the proof of Theorem \ref{diff thm}.
Recall that we constructed the random function $F$ on the sample space $\Theta$; we will denote by $F_\theta$ a realization of a random self-affine function for $\theta\in\Theta$.
 We consider the product space $\Theta\times[0,1]$ along with the product $\sigma$-algebra $\mathscr{F}\times\mathscr{B}$, where $\mathscr{B}$ denotes the Borel $\sigma$-algebra on $[0,1]$, and product measure $\tilde{\mathbb{P}}:=\PP\times\lambda$. Define random variables
 $$X_k(\theta,x):=\log\frac{a_{\omega(x)|_k}(\theta)}{l_{\omega_k(x)}},$$
where $\omega(x)$ is the coding for $x\in[0,1]$, and $a_{\omega(x)|_k}(\theta)$ is the ratio $a_{\omega|_k}$ for a given $x$ and realization $F_\theta$. Note that the $X_k$'s are mutually independent with respect to $\tilde{\mathbb{P}}$. Letting $\tilde{\EE}$ denote expectation with respect to the product measure $\tilde{\mathbb{P}}$, we find
 \begin{align*}
     \Tilde{\EE}(X_k) & = \int_{[0,1]}\int_{\Theta}\log\frac{a_{\omega(x)|_k}(\theta)}{l_{\omega_k(x)}}d\PP(\theta)dx\\
     & = \int_0^1\EE\Big(\log\frac{a_{\omega(x)|_k}}{l_{\omega_k(x)}}\Big)dx\\
     & = \sum_{i=0}^{m-1}\int_{b_i}^{b_{i+1}}\EE\Big(\log\frac{a_{\omega(x)|_k}}{l_{\omega_k(x)}}\Big)dx\\
     & = \sum_{i=0}^{m-1}l_i\EE\Big(\log\frac{a_i}{l_i}\Big)=0.
%     & = 0.
 \end{align*}
 Therefore, by Lemma \ref{oscillation lemma} we may (almost surely with respect to $\tilde{\mathbb{P}}$) find a sub-sequence $(k_j)$ such that  $S_{k_j}>0$ for each $j$, where $S_n:=X_1+\dots+X_n$. To conclude we observe
 \begin{equation*}
    \log\Big|\frac{F_\theta(x_{k_j}')-F_\theta(x_{k_j})}{x_{k_j}'-x_{k_j}}\Big|  = \log\prod_{i=1}^{k_j}\frac{a_{\omega(x)|_i}(\theta)}{l_{\omega_{i}(x)}}
     = \sum_{i=1}^{k_j}\log\frac{a_{\omega(x)|_i}(\theta)}{l_{\omega_{i}(x)}}
     = S_{k_j}>0. 
 \end{equation*}
 So for almost every $\theta\in\Theta$ and almost every $x\in[0,1]$, $F_\theta$ is non-differentiable at $x$, in view of  Proposition \ref{prop:easy}.
 \end{proof}

%\begin{align*}
%   \Tilde{\mathbb{V}}(X_k) & = \Tilde{\EE}(X_k^2)\\
%    & = \sum_{i=0}^{m-1}l_i\EE\Big(\log^2\frac{a_i}{l_i}\Big)\\
%    & =: \sigma^2.
%\end{align*}

%Should $\sigma^2<\infty$,  we may find a sub-sequence $k_j$ such that almost surely 
%$$\lim_{j\to\infty}\frac{S_{k_j}}{\sigma\sqrt{2k_j\log\log k_j}}=1$$
% by the law of iterated logarithm. To conclude we observe

%\begin{align*}
%    \log\Big|\frac{F_s(x_{k_j})-F_s(x_{k_j}')}{x_{k_j}-x_{k_j}'}\Big| & = \log\prod_{i=1}^{k_j}a_{(\omega(x)\uhr i)}(s)l_{\omega_{i}(x)}^{-1}\\
%    & = \sum_{i=1}^{k_j}\log\frac{a_{(\omega(x)\uhr i)}(s)}{l_{\omega_{i}(x)}}\\
%    & = S_{k_j}.
%\end{align*}

\section{Proof of Theorem \ref{box thm}} \label{sec:box-dimension}

%%%%%%%%%%%%%%%%%%%%%%%%%%%%%%%%%%%%%%%%%%%%%%%%%%%%%%%%%%%%%%%%%%%%%%%%%
%                                                                       %
%                NEW SECTION: Box Counting Dimension                    %
%                                                                       %              
%%%%%%%%%%%%%%%%%%%%%%%%%%%%%%%%%%%%%%%%%%%%%%%%%%%%%%%%%%%%%%%%%%%%%%%%%

Let $G$ denote the graph of a random self-affine function and let $\delta:=\min\{l_0,\dots,l_{m-1}\}$. To estimate $N_{\delta^n}(G)$, we construct a stopping set $Q_n$ in the following way:
For each infinite word $\omega\in\mathcal{I}$, let $k(\omega)$ be the unique integer $k$ such that 
$$\delta^{n+1}<l_{\omega_1}\cdots l_{\omega_k}\le \delta^n< l_{\omega_1}\cdots l_{\omega_{k-1}}.$$
We let $Q_n$ denote the finite set of finite words obtained by restricting each $\omega$ to $\omega|_{k(\omega)}$. Note that each word in $Q_n$ has length at least $n$.  The point is that for $\omega\in Q_n$, the width $l_\omega$ of the rectangle $R_\omega$ is comparable to $\delta^n$. Thus, a reasonable approximation for the number of boxes of side length $\delta^n$ needed to cover the graph is
\[
\delta^{-n}\sum_{\omega\in Q_n} h_\omega,
\]
since we cannot do much better than to cover each rectangle $R_\omega$ completely. However, this last sum is awkward to work with, so instead we follow Troscheit \cite{troscheit} and consider the sums
$$X_n:=\sum_{\omega\in Q_n}h_\omega l_\omega^{s-1},$$
where $s$ is as in the statement of Theorem \ref{diff thm}. Asymptotically, the process $\{X_n\}$ (suitably normalized) provides a good estimate of $N_{\delta^n}(G)$, but it is not a martingale. Therefore we consider the conditional expectation
$$Y_n:=\EE(X_n\big|\mathscr{F}_n),$$
where $\mathscr{F}_n:=\sigma\{\mathbf{Y}_{\omega}:\omega\in \bigcup_{k=0}^{n-1}\mathcal{I}_k\}$ is the sigma algebra generated by the first $n$ levels of the construction of $F$. It turns out that $\{Y_n\}$ is a martingale and a good approximation of $\{X_n\}$ as well. The following lemmas are to prove these assertions.

%%%%%%%%%%%%%%%%%%%%%%%%%%%%%%%%%%%%%%%%%%%%%%%%%%%%%%%%%%%%%%%%%%%%%%%%%
%                                                                       %
%           Expectation of Sum Over Stopping Sets Lemma                 %
%                                                                       %              
%%%%%%%%%%%%%%%%%%%%%%%%%%%%%%%%%%%%%%%%%%%%%%%%%%%%%%%%%%%%%%%%%%%%%%%%%

Let $(p_0,\dots,p_{m-1})$ be the probability vector defined by
\[
p_k:=\EE(a_k l_k^{s-1}), \qquad k=0,1,\dots,m-1.
\]
Note that $\sum_{k=0}^{m-1}p_k=1$ by the choice of $s$.
The first lemma is standard; however we include a proof for completeness.

\begin{lemma} \label{key identity}
Let $Q$ be a finite subset of $\mathcal{I}^*$ such that for all $\omega\in\mathcal{I}$ there exists a unique $k\in\NN$ such that $\omega|_k\in Q$. Then 
\begin{equation}
\label{key id equation}
   \sum_{\omega\in Q}\prod_{i=1}^{|\omega|}p_{\omega_i}=1. 
\end{equation}
\end{lemma}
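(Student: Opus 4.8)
The plan is to prove the identity by induction on the structure of $Q$, using the fact that $(p_0,\dots,p_{m-1})$ is a probability vector. The key observation is that any finite set $Q$ with the stated prefix-covering property (for every infinite word $\omega\in\mathcal{I}$ there is a unique $k$ with $\omega|_k\in Q$) is precisely the set of leaves of a finite $m$-ary tree: the empty word is the root, and $Q$ is obtained from $\{\varnothing\}$ by a finite sequence of operations, each replacing a current leaf $\tau\in Q$ by its $m$ children $\tau 0,\tau 1,\dots,\tau(m-1)$.

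I would set up the induction on $\#Q$ (or more precisely on the number of such ``expansion'' steps needed to build $Q$ from $\{\varnothing\}$). The base case is $Q=\{\varnothing\}$, where the left-hand side of \eqref{key id equation} is the empty product, which equals $1$ by the paper's convention. For the inductive step, suppose $Q$ satisfies the hypothesis and $\#Q>1$; then $Q$ contains a word $\tau$ all of whose $m$ siblings $\tau 0,\dots,\tau(m-1)$ also lie in $Q$ (pick $\tau$ of maximal length in $Q$, with $\tau$ obtained by deleting the last letter; maximality forces all $m$ children to be present, else uniqueness of the covering prefix fails for some infinite word). Let $Q' := (Q\setminus\{\tau 0,\dots,\tau(m-1)\})\cup\{\tau\}$. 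One checks $Q'$ still has the prefix-covering property and $\#Q' < \#Q$. Then
$$\sum_{\omega\in Q}\prod_{i=1}^{|\omega|}p_{\omega_i} = \sum_{\omega\in Q'\setminus\{\tau\}}\prod_{i=1}^{|\omega|}p_{\omega_i} + \sum_{j=0}^{m-1}p_{\tau_1}\cdots p_{\tau_{|\tau|}}\,p_j = \sum_{\omega\in Q'\setminus\{\tau\}}\prod_{i=1}^{|\omega|}p_{\omega_i} + \prod_{i=1}^{|\tau|}p_{\tau_i},$$
using $\sum_{j=0}^{m-1}p_j=1$, and the right-hand side is $\sum_{\omega\in Q'}\prod_{i=1}^{|\omega|}p_{\omega_i}$, which equals $1$ by the inductive hypothesis.

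I expect the only genuinely delicate point to be the combinatorial claim that such a ``fully expanded'' word $\tau$ always exists and that collapsing it preserves the hypothesis — this is where the uniqueness half of the covering assumption is really used (without uniqueness one could have a word and a proper extension of it both in $Q$, breaking the tree-leaf picture). Everything else is bookkeeping. An alternative, perhaps cleaner, phrasing avoids induction entirely: define a probability measure $\mu$ on $\mathcal{I}$ as the Bernoulli measure with marginal $(p_0,\dots,p_{m-1})$; then the cylinder sets $\{[\omega]:\omega\in Q\}$ form a partition of $\mathcal{I}$ by the hypothesis, and $\mu([\omega])=\prod_{i=1}^{|\omega|}p_{\omega_i}$, so summing gives $\mu(\mathcal{I})=1$. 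I would likely present the measure-theoretic version as the main argument since it is shortest, mentioning the tree interpretation as motivation.
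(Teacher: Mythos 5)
Your proposal is correct, and your preferred (measure-theoretic) argument takes a genuinely different route from the paper's. The paper proves the identity by induction on $N(Q):=\max\{|\gamma|:\gamma\in Q\}$: it collapses \emph{all} words of maximal length onto their parents in one step, using that $\sum_k p_k=1$, and invokes the inductive hypothesis for the resulting stopping set of smaller depth. Your inductive variant is essentially the same idea at finer granularity — you collapse one complete sibling family at a time, decreasing $\#Q$ by $m-1$ — and both versions rest on the same combinatorial fact (which you correctly flag as the only delicate point) that a word of maximal length in $Q$ must have all $m$ of its siblings in $Q$, by the uniqueness half of the covering hypothesis. (Minor wording slip: you write that ``$Q$ contains a word $\tau$ all of whose siblings lie in $Q$,'' but your displayed formula makes clear you mean that $\tau\notin Q$ while all $m$ \emph{children} $\tau 0,\dots,\tau(m-1)$ lie in $Q$; the mathematics is unaffected.) Your measure-theoretic argument is shorter and packages the combinatorics into a single observation: the hypothesis says precisely that the cylinders $\{[\omega]:\omega\in Q\}$ partition $\mathcal{I}$, so summing $\mu([\omega])=\prod_i p_{\omega_i}$ for the Bernoulli measure $\mu$ with marginal $(p_0,\dots,p_{m-1})$ gives $\mu(\mathcal{I})=1$. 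This is valid because $p_k=\EE(a_k l_k^{s-1})\ge 0$ and $\sum_k p_k=1$ by the definition of $s$, so $\mu$ is a genuine probability measure; what it buys is brevity and conceptual clarity, at the cost of importing the existence of product measures on $\mathcal{I}$, whereas the paper's induction is entirely elementary and self-contained.
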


\begin{proof}
 We induct on $N(Q):=\max\{|\gamma|:\gamma\in Q\}$. If $N(Q)=1$, then $Q=\mathcal{I}_1$ and
 $$\sum_{\omega\in Q}\prod_{i=1}^{|\omega|}p_{\omega_i}=\sum_{k=0}^{m-1}p_k=1.$$
Now suppose that $N(Q)=n$ and (\ref{key id equation}) holds for any stopping set $Q'$ with $N(Q')\leq n-1$. Define the sets $Q^{\max}:=\{\gamma\in Q:|\gamma|=n\}$, $Q':=\{\omega|_{n-1}:\omega\in Q^{\max}\}$, and $Q'':=(Q\setminus Q^{\max})\cup Q'$. Observe that $Q'$ and $Q\setminus Q^{\max}$ are disjoint. Then we have
 \begin{align*}
     \sum_{\omega\in Q}\prod_{i=1}^{|\omega|}p_{\omega_i} & =
     \sum_{\omega\in Q\setminus Q^{\max}}\prod_{i=1}^{|\omega|}p_{\omega_i}+\sum_{\omega\in Q^{\max}}\prod_{i=1}^{n}p_{\omega_i}\\
     & = \sum_{\omega\in Q\setminus Q^{\max}}\prod_{i=1}^{|\omega|}p_{\omega_i} +\sum_{\omega\in Q'}\prod_{i=1}^{n-1}p_{\omega_i}\sum_{k=0}^{m-1}p_k\\
     & =  \sum_{\omega\in Q\setminus Q^{\max}}\prod_{i=1}^{|\omega|}p_{\omega_i} +\sum_{\omega\in Q'}\prod_{i=1}^{n-1}p_{\omega_i}\\
     & = \sum_{\omega\in Q''}\prod_{i=1}^{|\omega|}p_{\omega_i}=1
 \end{align*}
by the induction hypothesis, since $Q''$ is a stopping set with $N(Q'')=n-1$. 
\end{proof}

%%%%%%%%%%%%%%%%%%%%%%%%%%%%%%%%%%%%%%%%%%%%%%%%%%%%%%%%%%%%%%%%%%%%%%%%%
%                                                                       %
%                   Key Identity for Y_n Corollary                      %
%                                                                       %              
%%%%%%%%%%%%%%%%%%%%%%%%%%%%%%%%%%%%%%%%%%%%%%%%%%%%%%%%%%%%%%%%%%%%%%%%%

For each $n\in\NN$ and $\omega\in \mathcal{I}_n$, we define a sub-stopping set 
 $$Q_{n}(\omega):=\{\gamma\in Q_n:\gamma|_n=\omega\}.$$ 
For $\omega\in \mathcal{I}^*$ we will use the short hand notation
\[
t_\omega:=h_\omega l_\omega^{s-1},
\]
so $X_n=\sum_{\omega\in Q_n} t_\omega$. For $\omega\in Q_n$ and $\gamma\in Q_{n}(\omega)$, we define
$$\tilde{t}_\gamma:=\frac{t_\gamma}{t_\omega}=\prod_{k=n+1}^{|\gamma|}a_{\gamma|_k}l_{\gamma_k}^{s-1}.$$
% $$\tilde{h}_{\gamma}:=\frac{h_\gamma}{h_\omega}=\prod_{k=n+1}^{|\gamma|}a_{\gamma|_k}\text{, \ \ and \ \ }\tilde{l}_\gamma:=\frac{l_\gamma}{l_\omega}=\prod_{k=n+1}^{|\gamma|}l_{\gamma_k}.$$

\begin{corollary} \label{key id 2}
For all $n\in\NN$ we have
$$Y_n=\sum_{\omega\in\mathcal{I}_n}t_\omega.$$
\end{corollary}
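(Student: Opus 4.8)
The plan is to compute $Y_n = \EE(X_n \mid \mathscr{F}_n)$ by decomposing the sum defining $X_n$ according to which level-$n$ ancestor each word in $Q_n$ has, and then taking the conditional expectation term by term. First I would write
$$
X_n = \sum_{\omega\in Q_n} t_\omega = \sum_{\omega\in\mathcal{I}_n}\ \sum_{\gamma\in Q_n(\omega)} t_\gamma
= \sum_{\omega\in\mathcal{I}_n} t_\omega \sum_{\gamma\in Q_n(\omega)} \tilde t_\gamma,
$$
using the definition $Q_n(\omega)=\{\gamma\in Q_n:\gamma|_n=\omega\}$ (which partitions $Q_n$) and the factorization $t_\gamma = t_\omega\,\tilde t_\gamma$ for $\gamma\in Q_n(\omega)$. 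The key observation is that each $t_\omega$ with $\omega\in\mathcal{I}_n$ is $\mathscr{F}_n$-measurable — it depends only on the vectors $\mathbf{Y}_{\omega|_0},\dots,\mathbf{Y}_{\omega|_{n-1}}$, i.e. the first $n$ levels of the construction — so it can be pulled out of the conditional expectation. Hence
$$
Y_n = \EE(X_n\mid\mathscr{F}_n) = \sum_{\omega\in\mathcal{I}_n} t_\omega\,\EE\Big(\sum_{\gamma\in Q_n(\omega)}\tilde t_\gamma \ \Big|\ \mathscr{F}_n\Big).
$$

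The heart of the argument is then to show that each inner conditional expectation equals $1$. For fixed $\omega\in\mathcal{I}_n$, the set $\{\gamma/\omega : \gamma\in Q_n(\omega)\}$ of ``suffixes'' forms a stopping set $Q$ in the sense of Lemma \ref{key identity}: by the defining inequality $\delta^{n+1}<l_{\gamma_1}\cdots l_{\gamma_{k(\gamma)}}\le\delta^n<l_{\gamma_1}\cdots l_{\gamma_{k(\gamma)-1}}$, every infinite word extending $\omega$ has exactly one prefix landing in $Q_n(\omega)$ — here one uses that $l_{\gamma_1}\cdots l_{\gamma_k}\to 0$ and that the product decreases by a factor between $\delta$ and $1$ at each step, so the window $(\delta^{n+1},\delta^n]$ relative to $l_\omega$ is hit exactly once. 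Moreover $\tilde t_\gamma = \prod_{k=n+1}^{|\gamma|} a_{\gamma|_k} l_{\gamma_k}^{s-1}$, and by Lemma \ref{initial set up} the random variables $\{a_{\gamma|_k}: k>n\}$ appearing here are independent of $\mathscr{F}_n$ and distributed exactly as the $a_i$'s for the ``fresh'' construction starting from $\omega$; thus $\EE(\tilde t_\gamma\mid\mathscr{F}_n) = \prod_{k=n+1}^{|\gamma|}\EE(a_{\gamma|_k} l_{\gamma_k}^{s-1}) = \prod_{k=n+1}^{|\gamma|} p_{\gamma_k}$ (using that the increments $a_{\gamma|_k}$ across distinct parents are independent). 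Applying Lemma \ref{key identity} to the stopping set of suffixes gives
$$
\EE\Big(\sum_{\gamma\in Q_n(\omega)}\tilde t_\gamma \ \Big|\ \mathscr{F}_n\Big) = \sum_{\gamma\in Q_n(\omega)}\ \prod_{k=n+1}^{|\gamma|} p_{\gamma_k} = 1,
$$
and substituting back yields $Y_n = \sum_{\omega\in\mathcal{I}_n} t_\omega$, as desired.

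I expect the main obstacle to be the bookkeeping in the second step: carefully justifying that $Q_n(\omega)$, re-indexed by suffixes, is genuinely a stopping set to which Lemma \ref{key identity} applies, and that the conditional-expectation computation respects the independence structure from Lemma \ref{initial set up} (in particular that no two suffix-prefixes being summed share a parent in a way that breaks independence — which is fine because within a single $\tilde t_\gamma$ the factors $a_{\gamma|_k}$ have distinct parents along the chain). The measurability claims and the term-by-term interchange of $\EE(\cdot\mid\mathscr{F}_n)$ with the finite sum are routine, and the probabilistic input reduces cleanly to Lemmas \ref{initial set up} and \ref{key identity}.
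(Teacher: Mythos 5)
Your proposal is correct and follows essentially the same route as the paper: decompose $X_n$ over the level-$n$ ancestors, pull the $\mathscr{F}_n$-measurable factors $t_\omega$ out of the conditional expectation, and reduce the inner expectation $\EE\big(\sum_{\gamma\in Q_n(\omega)}\tilde t_\gamma\big)=1$ to Lemma \ref{key identity} applied to the stopping set of suffixes. Your added justification that the suffix set is genuinely a stopping set and that the factors $a_{\gamma|_k}$ are independent (having distinct parents along the chain) is a point the paper leaves implicit.
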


\begin{proof}
  Observe that
 \begin{equation}
   \begin{split}  \label{Yn formula}
     Y_n & = \EE(X_n\big|\mathscr{F}_n) = \EE\left[\sum_{\omega\in Q_n}t_\omega \bigg|\mathscr{F}_n\right]\\
     & = \EE\left[\sum_{\omega\in \mathcal{I}_{n}}t_\omega \sum_{\gamma\in Q_{n}(\omega)}\tilde{t}_{\gamma}\bigg|\mathscr{F}_n\right]
      = \sum_{\omega\in \mathcal{I}_{n}}t_\omega \EE\left(\sum_{\gamma\in Q_{n}(\omega)}\tilde{t}_{\gamma}\right),
     \end{split}
 \end{equation}
where the last equality uses that $t_\omega$ is $\mathscr{F}_n$-measurable and $\tilde{t}_{\gamma}$ is independent of $\mathscr{F}_n$.
% We may take the sum out of the expectation in the last equality as it is $\mathscr{F}_n$-measurable (``taking out what is known"). Since the heights $h'_{\gamma}$ are independent of $\mathscr{F}_n$, we may drop the conditional expectation 
% $$\EE\Big[\sum_{\gamma\in Q_{n}(\omega)}h_{\gamma}'l_{\gamma}'^{(s-1)}\:\big|\mathscr{F}_n\Big]=\EE\Big[\sum_{\gamma\in Q_{n}(\omega)}h_{\gamma}'l_{\gamma}'^{(s-1)}\Big].$$
 To evaluate the last expectation in \eqref{Yn formula}, we note that for any $\gamma\in Q_n(\omega)$
 $$\tilde{t}_\gamma=\prod_{k=n+1}^{|\gamma|}a_{\gamma|_k}l_{\gamma_k}^{s-1}.$$
 By letting $Q'=\{\gamma'=(\gamma_{n+1},\dots,\gamma_{|\gamma|}):\gamma\in Q_n(\omega)\}$ we can rewrite (\ref{Yn formula}) as
 \begin{equation*}
     Y_n = \sum_{\omega\in \mathcal{I}_{n}}t_\omega \sum_{\gamma'\in Q'}\prod_{i=1}^{|\gamma'|}\EE(a_{\gamma_i'})l_{\gamma_i'}^{s-1}
       = \sum_{\omega\in \mathcal{I}_{n}}t_\omega,
 \end{equation*}
 where the last equality follows from Lemma \ref{key identity}.
\end{proof}

%%%%%%%%%%%%%%%%%%%%%%%%%%%%%%%%%%%%%%%%%%%%%%%%%%%%%%%%%%%%%%%%%%%%%%%%%
%                                                                       %
%                      Y_n is a Martingale Lemma                        %
%                                                                       %              
%%%%%%%%%%%%%%%%%%%%%%%%%%%%%%%%%%%%%%%%%%%%%%%%%%%%%%%%%%%%%%%%%%%%%%%%%

\begin{lemma} \label{martingale}
$\{Y_n\}_{n\in\NN}$ is a martingale with respect to the filtration $\{\mathscr{F}_n\}$.
\end{lemma}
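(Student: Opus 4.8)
The plan is to verify the defining martingale property $\EE(Y_{n+1}\mid\mathscr{F}_n)=Y_n$ directly from the clean formula $Y_n=\sum_{\omega\in\mathcal{I}_n}t_\omega$ supplied by Corollary \ref{key id 2}. First I would note that $\{Y_n\}$ is adapted: each $t_\omega=h_\omega l_\omega^{s-1}$ with $\omega\in\mathcal{I}_n$ depends only on the ratios $a_{\omega|_k}$, $k\le n$, hence on the vectors $\mathbf{Y}_\gamma$ for $\gamma\in\bigcup_{k=0}^{n-1}\mathcal{I}_k$, so $Y_n$ is $\mathscr{F}_n$-measurable. Integrability is immediate since by Corollary \ref{key id 2} and Lemma \ref{key identity} (applied to $Q=\mathcal{I}_n$, using $\EE t_\omega=\prod_{i=1}^n p_{\omega_i}$), we get $\EE(Y_n)=\sum_{\omega\in\mathcal{I}_n}\prod_{i=1}^n p_{\omega_i}=1<\infty$ for every $n$.

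For the martingale identity itself, I would expand $Y_{n+1}=\sum_{\gamma\in\mathcal{I}_{n+1}}t_\gamma$ by grouping each $\gamma\in\mathcal{I}_{n+1}$ according to its parent $\omega=\gamma|_n\in\mathcal{I}_n$ and its last letter $i\in\{0,\dots,m-1\}$, writing $\gamma=\omega i$. Using the multiplicative structure $h_{\omega i}=h_\omega a_{\omega i}$ and $l_{\omega i}=l_\omega l_i$, one obtains
$$t_{\omega i}=h_{\omega i}l_{\omega i}^{s-1}=t_\omega\, a_{\omega i} l_i^{s-1},$$
so that $Y_{n+1}=\sum_{\omega\in\mathcal{I}_n}t_\omega\sum_{i=0}^{m-1}a_{\omega i}l_i^{s-1}$. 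Since $t_\omega$ is $\mathscr{F}_n$-measurable while the vector $\mathbf{Y}_\omega=(y_{\omega 1},\dots,y_{\omega(m-1)})$ — and hence each $a_{\omega i}$ — is independent of $\mathscr{F}_n$, I can pull $t_\omega$ out of the conditional expectation and replace the inner sum by its unconditional mean:
$$\EE(Y_{n+1}\mid\mathscr{F}_n)=\sum_{\omega\in\mathcal{I}_n}t_\omega\,\EE\!\left(\sum_{i=0}^{m-1}a_{\omega i}l_i^{s-1}\right)=\sum_{\omega\in\mathcal{I}_n}t_\omega\sum_{i=0}^{m-1}\EE(a_i l_i^{s-1})=\sum_{\omega\in\mathcal{I}_n}t_\omega\sum_{i=0}^{m-1}p_i,$$
where Lemma \ref{initial set up} gives $\EE(a_{\omega i}l_i^{s-1})=\EE(a_i l_i^{s-1})=p_i$. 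By the defining equation \eqref{eq:box-dimension-equation} for $s$ we have $\sum_{i=0}^{m-1}p_i=1$, so the right-hand side equals $\sum_{\omega\in\mathcal{I}_n}t_\omega=Y_n$, which is exactly the martingale property.

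There is no serious obstacle here; the one point requiring a little care is the independence justification — namely that for fixed $\omega\in\mathcal{I}_n$ the random vector $\mathbf{Y}_\omega$ is independent of $\mathscr{F}_n=\sigma\{\mathbf{Y}_\gamma:\gamma\in\bigcup_{k=0}^{n-1}\mathcal{I}_k\}$, since $\omega$ itself has length $n$ and so $\mathbf{Y}_\omega$ is not among the generators of $\mathscr{F}_n$; this follows from the assumed independence of the family $\{\mathbf{Y}_\gamma:\gamma\in\mathcal{I}^*\}$. I would also remark that the computation above is essentially the one already carried out inside the proof of Corollary \ref{key id 2}, specialized to the one-step stopping set, so the lemma could alternatively be phrased as a corollary of that argument; but the direct one-step verification is cleaner to present.
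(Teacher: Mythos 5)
Your proof is correct and follows essentially the same route as the paper: both expand $Y_{n+1}=\sum_{\omega\in\mathcal{I}_n}t_\omega\sum_{i=0}^{m-1}a_{\omega i}l_i^{s-1}$, pull out the $\mathscr{F}_n$-measurable factor $t_\omega$, and use independence and identical distribution of $a_{\omega i}$ together with the defining equation for $s$ to reduce the inner expectation to $1$. Your additional remarks on adaptedness and integrability are a harmless (and slightly more complete) supplement to the paper's argument.
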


\begin{proof}
Indeed,
\begin{align*}
    \mathbb{E}(Y_{n+1}\big|\mathscr{F}_n) & = \mathbb{E}\left[\sum_{\omega\in \mathcal{I}_{n+1}}t_\omega \bigg|\mathscr{F}_n\right]
     = \mathbb{E}\left[\sum_{\omega\in \mathcal{I}_{n}}t_\omega \sum_{i=0}^{m-1}a_{\omega i}l_i^{s-1}\bigg|\mathscr{F}_n\right]\\
%    & =\sum_{\omega\in \mathcal{I}_{n}}h_\omega l_{\omega}^{s-1}\mathbb{E}\Big[\sum_{i=0}^{m-1}a_{\omega i}l_i^{s-1}\big|\mathscr{F}_n\Big]\\
    & = \sum_{\omega\in \mathcal{I}_{n}}t_\omega \mathbb{E}\left(\sum_{i=0}^{m-1}a_il_i^{s-1}\right)=Y_n.\\
\end{align*}
The first equality is from Corollary \ref{key id 2}. The second equality uses that for $\omega\in\mathcal{I}_n$, $a_{\omega i}$ is independent of $\mathscr{F}_n$ and has the same distribution as $a_i$; and the last equality follows from the definition of $s$. Therefore, $\{Y_n\}_{n\in\NN}$ is a martingale.
\end{proof}

In order to make stronger conclusions about the convergence of $Y_n$, we will show that the process $\{Y_n\}$ is $\mathcal{L}^2$-bounded. In the next two lemmas we make use of the quantity
\[
\alpha:=\EE\left(\sum_{i=0}^{m-1}a_i^2 l_{i}^{2(s-1)}\right).
\]
Observe that $\alpha<1$ by the definition of $s$.

%%%%%%%%%%%%%%%%%%%%%%%%%%%%%%%%%%%%%%%%%%%%%%%%%%%%%%%%%%%%%%%%%%%%%%%%%
%                                                                       %
%                      Y_n is L_2 Bounded Lemma                         %
%                                                                       %              
%%%%%%%%%%%%%%%%%%%%%%%%%%%%%%%%%%%%%%%%%%%%%%%%%%%%%%%%%%%%%%%%%%%%%%%%%

\begin{lemma} \label{L2 bounded} We have
$$\sup_{n\in\NN}\EE(Y_n^2)<\infty.$$
\end{lemma}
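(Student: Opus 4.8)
The plan is to expand $Y_n^2 = \left(\sum_{\omega\in\mathcal{I}_n} t_\omega\right)^2 = \sum_{\omega,\omega'\in\mathcal{I}_n} t_\omega t_{\omega'}$ and compute the expectation by grouping the pairs $(\omega,\omega')$ according to the length $j$ of their longest common prefix. For a pair sharing a common prefix $\sigma\in\mathcal{I}_j$ but diverging at level $j+1$, the factors $t_\omega/t_\sigma$ and $t_{\omega'}/t_\sigma$ depend on disjoint families of the random vectors $\mathbf{Y}_\tau$ (one family for descendants of $\sigma i$, the other for descendants of $\sigma i'$ with $i\ne i'$), hence are conditionally independent given $\mathscr{F}_{j+1}$ — and in fact, because the deterministic widths $l_\omega$ factor out and $\EE(a_{\tau k}) = \EE(a_k)$ with $\sum_k \EE(a_k)l_k^{s-1}=1$, each of $\EE(t_\omega/t_{\sigma i}\mid \mathscr{F}_{j+1})$ telescopes to a clean expression via Lemma \ref{key identity} applied to the stopping set $\mathcal{I}_{n}$ restricted below $\sigma i$.

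Concretely, I would first record the one-step identity: for $\sigma\in\mathcal{I}_j$,
$$
\EE\!\left(\sum_{\omega\in\mathcal{I}_n,\ \omega|_j=\sigma} t_\omega \,\Big|\, \mathscr{F}_j\right) = t_\sigma,
$$
which follows exactly as in Corollary \ref{key id 2} and Lemma \ref{martingale}. Then, writing $Y_n^2 = \sum_{j=0}^{n-1}\sum_{\sigma\in\mathcal{I}_j}\sum_{i\ne i'} \big(\text{sum over }\omega|_{j+1}=\sigma i\big)\big(\text{sum over }\omega'|_{j+1}=\sigma i'\big) + \sum_{\sigma\in\mathcal{I}_n} t_\sigma^2$ (the diagonal term collecting pairs with $\omega=\omega'$, absorbed into the $j=n$ slice), I take expectations level by level. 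Conditioning on $\mathscr{F}_{j+1}$ and using conditional independence of the two branch-sums together with the one-step identity pushes each off-diagonal term down to $\sum_{\sigma\in\mathcal{I}_j} t_\sigma^2 \sum_{i\ne i'} a_{\sigma i}a_{\sigma i'} l_i^{s-1}l_{i'}^{s-1}$, and then taking expectation over level $j+1$ and iterating downward produces a geometric bound governed by $\alpha = \EE\big(\sum_i a_i^2 l_i^{2(s-1)}\big) < 1$: the diagonal contribution at level $n$ is $\EE\sum_{\sigma\in\mathcal{I}_n} t_\sigma^2 = \alpha^n$, and the off-diagonal contribution summed over $j<n$ telescopes to something of order $\sum_{j=0}^{n-1}\alpha^j\cdot C$ for a constant $C$ coming from $\EE\big(\sum_{i\ne i'} a_i a_{i'} l_i^{s-1}l_{i'}^{s-1}\big) = \EE\big(\big(\sum_i a_i l_i^{s-1}\big)^2\big) - \alpha \le$ (a finite constant since $\sum_i a_i l_i^{s-1}\le 1$ a.s. when... — more carefully, bounded because each $a_i\le 1$ and $l_i^{s-1}$ is a fixed constant, so $\sum_i a_i l_i^{s-1} \le \sum_i l_i^{s-1} =: M < \infty$). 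Hence $\EE(Y_n^2) \le \alpha^n + C\sum_{j=0}^{n-1}\alpha^j \le \frac{C+1}{1-\alpha} < \infty$, uniformly in $n$.

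The main obstacle I anticipate is purely bookkeeping: correctly handling the conditional-independence step — making precise that, conditioned on $\mathscr{F}_{j+1}$, the two sums over descendants of $\sigma i$ and of $\sigma i'$ involve disjoint collections of the independent vectors $\{\mathbf{Y}_\tau\}$ and therefore factor — and then verifying that the downward iteration of the one-step identity really does leave exactly the factor $\alpha^j$ (rather than, say, a product of $\EE(a_k)$-type terms that fails to collapse). Lemma \ref{initial set up} and Lemma \ref{key identity} are the tools that make this go through cleanly; once the level-by-level recursion is set up, the geometric summation using $\alpha<1$ is immediate.
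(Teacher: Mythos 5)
Your argument is correct, and it rests on exactly the same two facts as the paper's proof: (i) for two words that have already split, the cross term factors by independence into a product of conditional means, each of which equals $1$ by the defining equation for $s$ (the martingale identity); and (ii) the diagonal sum $\EE\big(\sum_{\omega\in\mathcal{I}_j}t_\omega^2\big)=\alpha^j$ with $\alpha<1$. The difference is purely organizational: the paper conditions $Y_{n+1}^2$ on $\mathscr{F}_n$, bounds the off-diagonal part by $Y_n^2$ and the diagonal part by $C\alpha^n$, and obtains the one-step recursion $\EE(Y_{n+1}^2)\le\EE(Y_n^2)+C\alpha^n$, which it then sums; you instead expand $\EE(Y_n^2)$ in one shot by grouping pairs $(\omega,\omega')\in\mathcal{I}_n\times\mathcal{I}_n$ according to the length $j$ of their longest common prefix, getting $\EE(Y_n^2)\le\alpha^n+C\sum_{j=0}^{n-1}\alpha^j$ directly. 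The two computations are equivalent (yours is the unrolled recursion), and your common-ancestor decomposition is precisely the device the paper deploys later, in the proof of Lemma \ref{estimation}, to control $\EE[(X_n-Y_n)^2]$. Your handling of the constant is fine: $\sum_i a_i l_i^{s-1}\le\sum_i l_i^{s-1}$ almost surely since $a_i\le 1$, so $C=\EE\big[\big(\sum_i a_i l_i^{s-1}\big)^2\big]-\alpha$ is finite; and the conditional-independence step you flag as the main obstacle is immediate from the fact that the suffix products over descendants of $\sigma i$ and of $\sigma i'$ depend on disjoint subfamilies of the i.i.d.\ vectors $\{\mathbf{Y}_\tau\}$, all independent of $\mathscr{F}_{j+1}$. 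No gap.
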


\begin{proof}
To begin, we look at the conditional expectation of $Y_{n+1}^2$:
 \begin{equation}
 \begin{split}
 \label{conditional yn+1}
    \mathbb{E}(Y_{n+1}^2 \big|\mathscr{F}_n)& =\mathbb{E}\left[\left(\sum_{\omega\in \mathcal{I}_{n}}t_\omega \sum_{i=0}^{m-1}a_{\omega i}l_i^{s-1}\right)^2\Bigg| \mathscr{F}_n\right]\\
    & = \mathbb{E}\left[\sum_{\omega\in \mathcal{I}_{n}}t_\omega^2 \left(\sum_{i=0}^{m-1}a_{\omega i}l_i^{s-1}\right)^2 \right.\\
    &\qquad\quad \left. +\sum_{\sigma\ne\tau}t_\sigma t_\tau\left(\sum_{i=0}^{m-1}a_{\sigma i}l_i^{s-1}\right) \left(\sum_{i=0}^{m-1}a_{\tau i}l_i^{s-1}\right)\:\Bigg|\:\mathscr{F}_n\right]\\
\end{split}    
\end{equation}
where in the last line we sum over the pairs $(\sigma,\tau)$ of different words in $\mathcal{I}_n$. The conditional expectation in the second term can be simplified:

\begin{align*}
   \EE\Bigg[\sum_{\sigma\ne\tau} & t_\sigma t_\tau\Big(\sum_{i=0}^{m-1}a_{\sigma i}l_i^{s-1}\Big) \Big(\sum_{i=0}^{m-1}a_{\tau i}l_i^{s-1}\Big)\:\Big|\:\mathscr{F}_n\Bigg] \\
    & = \sum_{\sigma\ne\tau}t_\sigma t_\tau \EE\Bigg[\Big(\sum_{i=0}^{m-1}a_{\sigma i}l_i^{s-1}\Big)\Big(\sum_{i=0}^{m-1}a_{\tau i}l_i^{s-1}\Big)\:\Big|\:\mathscr{F}_n\Bigg]\\
%    & = \sum_{\gamma\ne\sigma}h_\gamma l_{\gamma}^{s-1}h_\sigma l_{\sigma}^{s-1}\EE\Bigg[\Big(\sum_{i=0}^{m-1}a_{\gamma i}l_i^{s-1}\Big)\Bigg]\EE\Bigg[\Big(\sum_{i=0}^{m-1}a_{\sigma i}l_i^{s-1}\Big)\Bigg]\\
    & = \sum_{\sigma\ne\tau}t_\sigma t_\tau \EE\left(\sum_{i=0}^{m-1}a_{ i}l_i^{s-1}\right)\EE\left(\sum_{i=0}^{m-1}a_{i}l_i^{s-1}\right)\\
    & = \sum_{\sigma\ne\tau}t_\sigma t_\tau \le Y_n^2.\\
\end{align*}
Here the second equality follows since $a_{\sigma i}$ and $a_{\tau i}$ are independent of $\mathscr{F}_n$ and each other for $\sigma\ne\tau$.
Substituting this back in (\ref{conditional yn+1}) we obtain
\begin{equation*}
 \mathbb{E}(Y_{n+1}^2\big|\mathscr{F}_n) \le
  Y_n^2 + \sum_{\omega\in \mathcal{I}_{n}}t_\omega^2 \mathbb{E}\Bigg[\Big(\sum_{i=0}^{m-1}a_{i}l_i^{s-1}\Big)^2\Bigg]
      =: Y_n^2 + C\sum_{\omega\in \mathcal{I}_{n}}t_\omega^2.
\end{equation*}
Taking expectation of both sides yields
\begin{equation*}
    \EE(Y_{n+1}^2) \le  \EE(Y_n^2)+C\EE\left(\sum_{\omega\in \mathcal{I}_{n}}t_\omega^2 \right)
%    & = \EE(Y_n^2)+C\Bigg[\EE\Big(\sum_{i=0}^{m-1}a_i^2 l_{i}^{2(s-1)}\Big)\Bigg]^n\\
  = \EE(Y_n^2)+C\alpha^n.
\end{equation*}
%Since $a_il_i^{s-1}<1,$ and $\EE(a_0l_0^{s-1}+\dots+a_{m-1}l_{m-1}^{s-1})=1,$ we have that $\alpha<1$. 
Since $\alpha<1$, it follows that
$$\EE(Y_{n}^2)\le \EE(Y_0^2)+C\sum_{k=1}^{\infty}\alpha^k<\infty,$$
completing the proof.
\end{proof}

The next lemma asserts that $Y_n$ is in fact a good approximation of $X_n$. 

%%%%%%%%%%%%%%%%%%%%%%%%%%%%%%%%%%%%%%%%%%%%%%%%%%%%%%%%%%%%%%%%%%%%%%%%%
%                                                                       %
%                   Y_n Well Approximates X_n Lemma                     %
%                                                                       %              
%%%%%%%%%%%%%%%%%%%%%%%%%%%%%%%%%%%%%%%%%%%%%%%%%%%%%%%%%%%%%%%%%%%%%%%%%

\begin{lemma} \label{estimation}
There is a constant $C>0$ such that for all $n\in\mathbb{N}$, we have
\begin{equation} \label{eq:L2-estimate}
\EE[(X_n-Y_n)^2]\le C\alpha^n.
\end{equation}
%where $\alpha=\EE(a_0^2l_0^{2(s-1)}+\dots+a_{m-1}^2l_{m-1}^{2(s-1)})<1$.
\end{lemma}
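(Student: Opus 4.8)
The plan is to compute the second moment $\EE[(X_n - Y_n)^2]$ by conditioning on $\mathscr{F}_n$ and exploiting the conditional independence of the sub-stopping sets $Q_n(\omega)$ across distinct $\omega \in \mathcal{I}_n$. Recall $X_n = \sum_{\omega \in \mathcal{I}_n} t_\omega \sum_{\gamma \in Q_n(\omega)} \tilde{t}_\gamma$ and, by Corollary \ref{key id 2}, $Y_n = \sum_{\omega \in \mathcal{I}_n} t_\omega$. Writing $Z_\omega := \sum_{\gamma \in Q_n(\omega)} \tilde{t}_\gamma$, we have $X_n - Y_n = \sum_{\omega \in \mathcal{I}_n} t_\omega (Z_\omega - 1)$, and the key structural fact is that, conditionally on $\mathscr{F}_n$, the random variables $\{Z_\omega : \omega \in \mathcal{I}_n\}$ are independent (the sub-construction inside each level-$n$ rectangle uses disjoint families of the $\mathbf{Y}$'s) and each has conditional mean $1$ by the computation in the proof of Corollary \ref{key id 2}. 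Hence the cross terms vanish under $\EE(\,\cdot\,|\mathscr{F}_n)$ and
\[
\EE[(X_n - Y_n)^2 \mid \mathscr{F}_n] = \sum_{\omega \in \mathcal{I}_n} t_\omega^2\, \EE[(Z_\omega - 1)^2 \mid \mathscr{F}_n] = \sum_{\omega \in \mathcal{I}_n} t_\omega^2 \cdot \mathrm{Var}(Z_\omega \mid \mathscr{F}_n).
\]

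\textbf{Bounding the conditional variance.} The next step is to show $\mathrm{Var}(Z_\omega \mid \mathscr{F}_n)$ is bounded by an absolute constant $C'$ independent of $\omega$ and $n$. Since $Z_\omega$ is distributed (given $\mathscr{F}_n$) exactly like $X_{k}$-type sums for the induced sub-model — more precisely, $Z_\omega \stackrel{d}{=} \sum_{\gamma' \in Q'} \prod_{i=1}^{|\gamma'|} a_{\gamma_i'} l_{\gamma_i'}^{s-1}$ where $Q'$ is a stopping set — it suffices to show that the second moment of such a sum is uniformly bounded. I would do this by an argument parallel to (and in fact slightly simpler than) Lemma \ref{L2 bounded}: writing $W := \sum_{\gamma' \in Q'} \prod a_{\gamma_i'} l_{\gamma_i'}^{s-1}$, one has $\EE(W) = 1$ by Lemma \ref{key identity}, and the martingale approximating $W$ across sub-levels has its $\mathcal{L}^2$-norm controlled by the same geometric series $\sum_k \alpha^k$ that appears in Lemma \ref{L2 bounded}, since $\alpha < 1$. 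Because $Q'$ is a \emph{finite} stopping set, $W$ itself (not just its level-truncations) is bounded in $\mathcal{L}^2$ by a constant $C'$ depending only on the model parameters. Thus $\mathrm{Var}(Z_\omega \mid \mathscr{F}_n) \le \EE(Z_\omega^2 \mid \mathscr{F}_n) \le C'$.

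\textbf{Summing over the level-$n$ words.} Combining the two steps, $\EE[(X_n - Y_n)^2 \mid \mathscr{F}_n] \le C' \sum_{\omega \in \mathcal{I}_n} t_\omega^2$, and taking expectations,
\[
\EE[(X_n - Y_n)^2] \le C'\, \EE\Big(\sum_{\omega \in \mathcal{I}_n} t_\omega^2\Big) = C'\, \EE\Big(\sum_{\omega \in \mathcal{I}_n} h_\omega^2 l_\omega^{2(s-1)}\Big) = C'\, \alpha^n,
\]
where the last equality is exactly the identity $\EE\big(\sum_{\omega \in \mathcal{I}_n} t_\omega^2\big) = \alpha^n$ already used at the end of the proof of Lemma \ref{L2 bounded} (it follows by the same independence/factorization that gives $\EE(\sum_{\mathcal{I}_n} t_\omega^2) = \prod_{k} \EE(\sum_i a_i^2 l_i^{2(s-1)}) = \alpha^n$). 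Setting $C := C'$ gives \eqref{eq:L2-estimate}.

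\textbf{Main obstacle.} The routine parts are the conditional-independence bookkeeping and the geometric summation. The one point requiring genuine care is the uniform bound $\mathrm{Var}(Z_\omega \mid \mathscr{F}_n) \le C'$: one must be sure that the bound is truly independent of the random depth of the stopping set $Q_n(\omega)$ and of $\omega$ itself. This is where the self-similarity of the construction is essential — $Z_\omega$ has the same law as the analogous quantity for a \emph{fresh} copy of the model with $Q'$ in place of $Q_n$, and the $\mathcal{L}^2$ estimate of Lemma \ref{L2 bounded} was proved with a bound ($\EE(Y_0^2) + C\sum_{k\ge 1}\alpha^k$) that does not depend on how deep the stopping set reaches. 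Making this reduction precise, rather than the inequalities themselves, is the crux of the argument.
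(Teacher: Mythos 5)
Your decomposition $X_n-Y_n=\sum_{\omega\in\mathcal{I}_n}t_\omega(Z_\omega-1)$, the vanishing of the cross terms by independence and $\EE(Z_\omega)=1$, and the final identity $\EE\bigl(\sum_{\omega\in\mathcal{I}_n}t_\omega^2\bigr)=\alpha^n$ are exactly the first and last steps of the paper's proof (your $Z_\omega$ is the paper's $S(\omega)$). Where you diverge is the crux you correctly single out: the uniform bound $\mathrm{Var}(Z_\omega)\le C'$. The paper proves this by brute force: it writes $\EE[(S(\omega)-1)^2]\le\EE\bigl(\sum_{\sigma\ne\tau}\tilde t_\sigma\tilde t_\tau\bigr)$ using $\tilde t_\gamma^2\le\tilde t_\gamma$, splits the off-diagonal sum according to the last common ancestor $\gamma=\sigma\wedge\tau$, absorbs the within-vector dependence of $a_{\sigma|_{|\gamma|+1}}$ and $a_{\tau|_{|\gamma|+1}}$ into a constant $C'=\max\EE(a_ka_l)/(\EE a_k\,\EE a_l)$, and sums $\sum_{\gamma}\EE(\tilde t_\gamma^2)\le\sum_{j\ge 0}\alpha^j=1/(1-\alpha)$. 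Your route via a second-moment bound for the stopping-set sum $W$ is genuinely different and, if executed carefully, cleaner: the correct object is the \emph{stopped} martingale $Z_k:=\sum_{\gamma\in Q',\,|\gamma|\le k}t_\gamma+\sum_{\omega\ \text{unstopped at level }k}t_\omega$, whose increments are orthogonal with $\EE[(Z_{k+1}-Z_k)^2]\le\mathrm{const}\cdot\EE\bigl(\sum_{\omega\ \text{unstopped}}t_\omega^2\bigr)\le\mathrm{const}\cdot\alpha^k$, giving $\EE(W^2)\le 1+C/(1-\alpha)$ uniformly over finite stopping sets $Q'$ and handling the dependence within each vector $\mathbf{Y}_\gamma$ automatically. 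One caution: do not read ``the martingale approximating $W$'' as the full-level sums $Y'_k$ of the fresh copy — the gap between those and $W$ is precisely what this lemma quantifies, so that reading would be circular; with the stopped martingale the argument is sound, and this is the one step you would need to write out in full.
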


\begin{proof}
 We employ a similar strategy as in Lemma \ref{L2 bounded}. First define the notation
 $$S(\omega):=\sum_{\gamma\in Q_n(\omega)}\tilde{t}_{\gamma},$$
 and notice that for each $\omega\in\mathcal{I}_n$ we have $\EE(S(\omega))=1$ by the proof of Corollary \ref{key id 2}. Furthermore, $S(\omega)$ is independent of $\mathscr{F}_n$. Next, we calculate
 \begin{align*}
 %\begin{split} %\label{the big one}
     \EE[(X_n-Y_n)^2\big|\mathscr{F}_n] & = \EE\Bigg[\Bigg(\sum_{\omega\in Q_n}t_\omega -\sum_{\omega\in \mathcal{I}_{n}}t_\omega \Bigg)^2\:\Big|\:\mathscr{F}_n\Bigg]\\
     & = \EE\Bigg[\Bigg(\sum_{\omega\in \mathcal{I}_{n}}t_\omega \Big(S(\omega)-1\Big)\Bigg)^2\:\Big|\:\mathscr{F}_n\Bigg]\\
     & = \EE\Bigg[\sum_{\omega\in \mathcal{I}_{n}}t_\omega^2 \Big(S(\omega)-1\Big)^2 + \\
     & \qquad\quad + {\sum_{\substack{\sigma,\tau\in\mathcal{I}_n\\\sigma\ne\tau}}t_\sigma t_\tau \Big(S(\sigma)-1\Big)\Big(S(\tau)-1\Big)\Big|\mathscr{F}_n \Bigg]}
%     & = \EE\Bigg[\sum_{\omega\in \mathcal{I}_{n}}h_\omega^2 l_{\omega}^{2(s-1)}\Big(S(\omega)-1\Big)^2\:\Big|\:\mathscr{F}_n\Bigg]\\
%     & =\sum_{\omega\in \mathcal{I}_{n}}h_\omega^2 l_{\omega}^{2(s-1)}\EE\Big[(S(\omega)-1)^2\Big].
% \end{split}
 \end{align*}
The double sum vanishes because for each $\sigma,\tau\in\mathcal{I}_n$ with $\sigma\ne\tau$ we have
 \begin{align*}
      \EE\Big[t_{\sigma}t_\tau (S(\sigma)-1)(S(\tau)-1)\big|\mathscr{F}_n \Big]
%		 & = h_\tau l_\tau^{s-1}h_{\sigma}l_{\sigma}^{s-1}\EE\Big[(S(\tau)-1)(S(\sigma)-1)\Big|\mathscr{F}_n \Big]\\
     & = t_{\sigma}t_\tau\EE\Big[(S(\sigma)-1)(S(\tau)-1)\Big]\\
     & = t_{\sigma}t_\tau\EE(S(\sigma)-1)\EE(S(\tau)-1)\\
     & = 0.
 \end{align*}
 Here, the first equality follows since $t_\sigma$ and $t_\tau$ are $\mathscr{F}_n$-measurable and $S(\sigma)$ and $S(\tau)$ are independent of $\mathscr{F}_n$, and the second equality comes from the fact that $\sigma$ and $\tau$ have split before level $n$, and thus $S(\sigma)$ and $S(\tau)$ are independent of each other. As a result, we have
\begin{equation} 
%\begin{split}
\EE[(X_n-Y_n)^2\big|\mathscr{F}_n]  = \EE\Bigg[\sum_{\omega\in \mathcal{I}_{n}}t_\omega^2 \Big(S(\omega)-1\Big)^2\:\Big|\:\mathscr{F}_n\Bigg]
      =\sum_{\omega\in \mathcal{I}_{n}}t_\omega^2 \EE\Big[(S(\omega)-1)^2\Big]. 
%\end{split}
\label{eq:the big one}
\end{equation}
 
We next estimate $\EE\big[(S(\omega)-1)^2\big]$ for each $\omega\in\mathcal{I}_n$. Note that this expectation is just the variance of $S(\omega)$. We show that it is bounded by a uniform constant, as follows:
 \begin{align*}
     \EE\Big[(S(\omega)-1)^2\Big] 
%    & = \EE\Big[(S(\omega))^2\Big]-2\EE(S(\omega))+1 
		 & = \EE\Big[(S(\omega))^2\Big]-1\\
     & = \EE\left(\sum_{\gamma\in Q_n(\omega)}\tilde{t}_{\gamma}^{2}\right)+\EE\left(\sum_{\substack{\sigma,\tau\in Q_n(\omega)\\\sigma\ne\tau}}\tilde{t}_{\sigma}\tilde{t}_{\tau}\right)-1\\
     & \le \EE\left(\sum_{\substack{\sigma,\tau\in Q_n(\omega)\\\sigma\ne\tau}}\tilde{t}_{\sigma}\tilde{t}_{\tau}\right).
 \end{align*}
Here we used the fact that $\tilde{t}_{\gamma}^{2}\le \tilde{t}_{\gamma}$ in combination with $\EE(S(\omega))=1$. 
Now, in order to estimate the right side of the last inequality, we split the terms in the sum according to the last common ancestor of the words $\sigma$ and $\tau$.
To that end we define 
$$W(\omega):=\{\gamma\in\mathcal{I}^*:|\gamma|\ge n, \text{ and }\gamma\text{ is a prefix of some }\gamma'\in Q_n(\omega)\}.$$
Note that $\omega\in W(\omega)$. Next we define the set 
\[
U(\gamma):=\left\{(\sigma,\tau)\in Q_n(\omega)\times Q_n(\omega):\ \sigma\wedge\tau=\gamma\right\}, 
%U(\gamma):=\left\{(\sigma,\tau)\in Q_n(\omega)\times Q_n(\omega):\ \sigma|_{|\gamma|}=\tau|_{|\gamma|}=\gamma\ \mbox{and}\  \sigma_{|\gamma|+1}\ne\tau_{|\gamma|+1}\right\}.
\]
where $\sigma\wedge\tau$ denotes the longest common prefix of $\sigma$ and $\tau$.
%So $U(\gamma)$ consists of those pairs of $\sigma,\tau\in Q_n(\omega)$ whose last common ancestor is $\gamma$.
Now observe that
\begin{equation}
    \begin{split}
    \label{the ugly one}
    &\EE\left(\sum_{\substack{\sigma,\tau\in Q_n(\omega)\\\sigma\ne\tau}}\tilde{t}_{\sigma}\tilde{t}_{\tau}\right)
      = \sum_{\gamma\in W(\omega)}\sum_{(\sigma,\tau)\in U(\gamma)}\EE(\tilde{t}_{\sigma}\tilde{t}_{\tau})\\
    & \qquad = \sum_{\gamma\in W(\omega)}\sum_{(\sigma,\tau)\in U(\gamma)}\EE\big(\tilde{t}_\gamma^{2}\big)\EE\big(a_{\sigma|_{|\gamma|+1}}l_{\sigma_{|\gamma|+1}}^{s-1}a_{\tau|_{|\gamma|+1}}l_{\tau_{|\gamma|+1}}^{s-1}\big)\\
    & \qquad\qquad\quad \times\EE\left(\prod_{i=|\gamma|+2}^{|\sigma|}a_{\sigma|_i}l_{\sigma_i}^{s-1}\right)\EE\left(\prod_{i=|\gamma|+2}^{|\tau|}a_{\tau|_i}l_{\tau_i}^{s-1}\right)\\
    & \qquad \le C'\sum_{\gamma\in W(\omega)}\EE\big(\tilde{t}_\gamma^{2}\big)\sum_{(\sigma,\tau)\in U(\gamma)}\EE\left(\prod_{i=|\gamma|+1}^{|\sigma|}a_{\sigma|_i}l_{\sigma_i}^{s-1}\right)\EE\left(\prod_{i=|\gamma|+1}^{|\tau|}a_{\tau|_i}l_{\tau_i}^{s-1}\right),
    \end{split}
\end{equation}
where
\[
C':=\max\left\{\frac{\EE(a_k a_l)}{\EE(a_k)\EE(a_l)}:\ (k,l)\in\mathcal{I}_1\times\mathcal{I}_1\right\}.
\] 
Using that $U(\gamma)\subset Q_n(\gamma)\times Q_n(\gamma)$, we estimate the double sum over $(\sigma,\tau)$ in the last expression in \eqref{the ugly one} by
\begin{align*}
    & \sum_{(\sigma,\tau)\in U(\gamma)}\EE\left(\prod_{i=|\gamma|+1}^{|\sigma|}a_{\sigma|_i}l_{\sigma_i}\right)\EE\left(\prod_{i=|\gamma|+1}^{|\tau|}a_{\tau|_i}l_{\tau_i}\right)\\   
    & \qquad\qquad \le \sum_{\sigma,\tau\in Q_n(\gamma)}\EE\left(\prod_{i=|\gamma|+1}^{|\sigma|}a_{\sigma|_i}l_{\sigma_i}\right)\EE\left(\prod_{i=|\gamma|+1}^{|\tau|}a_{\tau|_i}l_{\tau_i}\right)\\
    & \qquad\qquad = \big(E(S(\gamma))\big)^2=1.
%    & \qquad =\sum_{\gamma\in W(\omega)}\EE(\tilde{h}_\gamma^{2}\tilde{l}_\gamma^{2(s-1)}).
\end{align*}
Next, we group the elements of $W(\omega)$ by their length, to obtain the estimate
\begin{equation}
    \begin{split}
        \label{the last one}
        \sum_{\gamma\in W(\omega)}\EE\big(\tilde{t}_\gamma^{2}\big) & \le\sum_{j=n}^\infty\sum_{\gamma\in\mathcal{I}_j,\,\gamma|_n=\omega}\EE\big(\tilde{t}_\gamma^{2}\big)
         = \sum_{j=n}^\infty\sum_{\gamma\in\mathcal{I}_{j-n}}\EE\big(t_\gamma^{2}\big)\\
        & = \sum_{j=n}^\infty\alpha^{j-n}=\frac{1}{1-\alpha},
    \end{split}
\end{equation}
%\substack{\gamma\in\mathcal{I}_j \\ \gamma|_n=\omega}
where the first equality follows since $h_{\gamma_{n+1},\dots,\gamma_{j}}$ has the same distribution as $\tilde{h}_{\gamma}$.

Substituting the last two estimates into \eqref{the ugly one} and combining with (\ref{eq:the big one}) we see that
\begin{equation*}
    \EE[(X_n-Y_n)^2\big|\mathscr{F}_n] = \sum_{\omega\in \mathcal{I}_{n}}t_\omega^2 \EE\Big[(S(\omega)-1)^2\Big]
     \le \frac{C'}{1-\alpha}\sum_{\omega\in \mathcal{I}_{n}}t_\omega^2.
\end{equation*}
Put $C:=C'/(1-\alpha)$, then by taking expectation of both sides we obtain \eqref{eq:L2-estimate}.
%$$\EE[(X_n-Y_n)^2]\le C\alpha^n,$$
%as desired.
\end{proof}

%%%%%%%%%%%%%%%%%%%%%%%%%%%%%%%%%%%%%%%%%%%%%%%%%%%%%%%%%%%%%%%%%%%%%%%%%
%                                                                       %
%                  Using X_n to Count Boxes Lemma                       %
%                                                                       %              
%%%%%%%%%%%%%%%%%%%%%%%%%%%%%%%%%%%%%%%%%%%%%%%%%%%%%%%%%%%%%%%%%%%%%%%%%

\begin{lemma} \label{counting of boxes}
We have the inequalities
$$\frac{\delta^{-ns}}{\delta}X_n\le N_{\delta^n}(G)\le \delta^{-ns}X_n+2\delta^{-(n+1)}.$$
\end{lemma}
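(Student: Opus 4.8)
The plan is to prove the two inequalities separately by a direct box-counting argument, exploiting the geometric fact that for each $\omega\in Q_n$ the rectangle $R_\omega$ has width $l_\omega\in(\delta^{n+1},\delta^n]$ and height $h_\omega$, and that these rectangles tile the region around the graph $G$ in the sense that $G\subseteq\bigcup_{\omega\in Q_n}R_\omega$ (this follows from Proposition~\ref{prop:cont} and the nested structure of the $R_\omega$, together with the fact that $\{[b_\omega,b_{\omega'}]:\omega\in Q_n\}$ partitions $[0,1]$). For the upper bound I would cover each $R_\omega$ by a column of $\delta^n$-boxes: since $R_\omega$ has width at most $\delta^n$ it fits (horizontally) inside at most two columns of boxes of side $\delta^n$, and vertically it needs at most $\lceil h_\omega/\delta^n\rceil\le h_\omega\delta^{-n}+1$ boxes. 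Summing over $\omega\in Q_n$ gives roughly $2\sum_\omega(h_\omega\delta^{-n}+1)$; the term $2\delta^{-n}\sum_\omega h_\omega$ needs to be bounded by $\delta^{-ns}X_n$ and the term $2\#Q_n$ by $2\delta^{-(n+1)}$.

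The key accounting step uses the defining relation $\delta^{n+1}<l_\omega\le\delta^n$ for $\omega\in Q_n$. From $l_\omega\le\delta^n$ we get $l_\omega^{s-1}\ge\delta^{n(s-1)}$ (recall $s>1$, so $s-1>0$ and $x\mapsto x^{s-1}$ is increasing), hence
\[
h_\omega=h_\omega l_\omega^{s-1}l_\omega^{1-s}\le h_\omega l_\omega^{s-1}\delta^{-n(s-1)}=t_\omega\,\delta^{-n(s-1)},
\]
so $\delta^{-n}\sum_\omega h_\omega\le\delta^{-n}\delta^{-n(s-1)}\sum_\omega t_\omega=\delta^{-ns}X_n$. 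For the cardinality term, the intervals $[b_\omega,b_{\omega'}]$, $\omega\in Q_n$, are disjoint with total length $1$ and each has length $l_\omega>\delta^{n+1}$, so $\#Q_n<\delta^{-(n+1)}$. Combining, $N_{\delta^n}(G)\le\delta^{-ns}X_n+2\delta^{-(n+1)}$; one should double-check whether the horizontal ``factor of $2$'' and the ``$+1$'' per rectangle are absorbed cleanly or whether a slightly more careful bookkeeping (e.g. assigning each $R_\omega$ to the grid column containing its left edge) removes the factor $2$ entirely, which is the only place the constants are delicate.

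For the lower bound I would argue that any cover of $G$ by $\delta^n$-boxes must, for each $\omega\in Q_n$, use at least $h_\omega/\delta^n$ boxes to cover the portion of $G$ lying above the sub-interval $[b_\omega,b_{\omega'}]$ — here one uses that $G$ restricted to that interval is the graph of a continuous function whose range has diameter at least $h_\omega$ (the graph meets both the top and bottom edges of $R_\omega$ through the diagonal structure of the construction, or at least its oscillation over $[b_\omega,b_{\omega'}]$ is $\ge h_\omega$), so its graph cannot be covered by fewer than $h_\omega\delta^{-n}$ boxes of side $\delta^n$ stacked in any single column; and distinct $\omega$ contribute via disjoint $x$-ranges of width $>\delta^{n+1}$, but since a single $\delta^n$-column can overlap several such narrow intervals, one instead sums the vertical requirements and divides by the maximum horizontal overlap. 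Cleanly: $N_{\delta^n}(G)\ge\delta^{-n}\sum_{\omega\in Q_n}h_\omega/(\text{overlap})$, and using $l_\omega>\delta^{n+1}$ to bound how many $\omega$ share a column, one recovers $N_{\delta^n}(G)\ge\delta^{-1}\delta^{-n}\sum_\omega h_\omega$; finally $h_\omega\ge h_\omega l_\omega^{s-1}\cdot\delta^{-(n+1)(s-1)}$ would go the wrong way, so instead I would use $l_\omega>\delta^{n+1}$ to get $l_\omega^{s-1}>\delta^{(n+1)(s-1)}$ and hence $t_\omega=h_\omega l_\omega^{s-1}>h_\omega\delta^{(n+1)(s-1)}$, giving $X_n>\delta^{(n+1)(s-1)}\sum_\omega h_\omega$, i.e. $\sum_\omega h_\omega<\delta^{-(n+1)(s-1)}X_n$, which again is the wrong direction — so the correct route for the lower bound is to bound $N_{\delta^n}(G)$ from below directly by $\delta^{-ns}X_n/\delta$ using $l_\omega\le\delta^n\Rightarrow l_\omega^{s-1}\le\delta^{n(s-1)}$ is false for $s>1$; rather $l_\omega\le\delta^n$ with $s-1>0$ gives $l_\omega^{s-1}\le\delta^{n(s-1)}$ — wait, that \emph{is} true — so $t_\omega\le h_\omega\delta^{n(s-1)}$, hence $X_n\le\delta^{n(s-1)}\sum_\omega h_\omega$ and $\sum_\omega h_\omega\ge\delta^{-n(s-1)}X_n$, which combined with $N_{\delta^n}(G)\ge\delta^{-n}\delta^{-1}\sum_\omega h_\omega$ yields exactly $N_{\delta^n}(G)\ge\delta^{-ns-1}X_n=\delta^{-1}\delta^{-ns}X_n$. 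The main obstacle is thus the geometric lower bound on the number of boxes per rectangle: rigorously justifying that the oscillation of $F$ over $[b_\omega,b_{\omega'}]$ is at least a fixed fraction of $h_\omega$ (so that no fewer than $\sim h_\omega/\delta^n$ boxes suffice, accounting for the fact that $\delta^n$-columns do not align with the $l_\omega$-intervals), and tracking the overlap factor of $\delta^{-1}$ coming from $l_\omega>\delta^{n+1}$; the powers-of-$\delta$ bookkeeping is routine once one fixes the direction of each inequality as above.
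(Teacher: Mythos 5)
Your overall strategy is the same as the paper's: cover the graph rectangle-by-rectangle over the stopping set $Q_n$, bound $\#Q_n\le\delta^{-(n+1)}$ (your argument for this is fine), use that the oscillation of $F$ over $[b_\omega,b_{\omega'}]$ is the height $h_\omega$ of $R_\omega$ (this needs no extra work here: the graph joins diagonally opposite corners of $R_\omega$), and convert between $\sum_\omega h_\omega$ and $X_n$ via $\delta^{n+1}<l_\omega\le\delta^n$. The problem is that at exactly the two delicate steps your inequalities point the wrong way. In the upper bound you assert that $l_\omega\le\delta^n$ gives $l_\omega^{s-1}\ge\delta^{n(s-1)}$; since $x\mapsto x^{s-1}$ is increasing for $s>1$ (as your own parenthetical says), the correct conclusion is $l_\omega^{s-1}\le\delta^{n(s-1)}$, hence $h_\omega=t_\omega l_\omega^{1-s}\ge t_\omega\delta^{-n(s-1)}$ --- the reverse of what you need. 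The bound that actually works is $l_\omega>\delta^{n+1}$, which gives $h_\omega<t_\omega\delta^{-(n+1)(s-1)}$ and therefore $\delta^{-n}\sum_\omega h_\omega\le\delta^{-(s-1)}\,\delta^{-ns}X_n$; the extra constant $\delta^{-(s-1)}$ cannot be removed in general.

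In the lower bound you correctly obtain $\sum_\omega h_\omega\ge\delta^{-n(s-1)}X_n$, but then, after dividing the total vertical requirement $\delta^{-n}\sum_\omega h_\omega$ by the overlap factor $\delta^{-1}$, you record the result as $\delta^{-1}\delta^{-n}\sum_\omega h_\omega$. Dividing by $\delta^{-1}$ multiplies by $\delta$, so the conclusion is $N_{\delta^n}(G)\ge\delta\cdot\delta^{-ns}X_n$, not $\delta^{-1}\delta^{-ns}X_n$. This is not a quibble about constants: the printed lower bound $\delta^{-ns-1}X_n\le N_{\delta^n}(G)$ combined with the printed upper bound forces $X_n\le 2\delta^{n(s-1)-1}/(\delta^{-1}-1)\to0$ when $s>1$, contradicting the fact, established in the proof of Theorem \ref{box thm}, that $X_n$ is almost surely bounded away from $0$. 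So the lemma as stated cannot be literally correct, and the paper's own proof makes the same slip (it deduces $\frac1\delta\sum_\omega h_\omega\delta^{-n}\le N_{\delta^n}(G)$ from the very observation that each box meets at most $\delta^{-1}$ rectangles). The defensible statement is
\begin{equation*}
\delta\cdot\delta^{-ns}X_n\ \le\ N_{\delta^n}(G)\ \le\ \delta^{1-s}\,\delta^{-ns}X_n+2\delta^{-(n+1)},
\end{equation*}
which is all that the proof of Theorem \ref{box thm} requires, since only $\log N_{\delta^n}(G)/n$ enters there. Your instinct to double-check the bookkeeping was the right one; follow it to the corrected constants rather than forcing the inequalities to land on the printed ones.
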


\begin{proof}
 As in the proof of \cite[Proposition 11.1]{falconer}, we estimate the number of boxes $N_{\delta^n}(G)$ by adding the heights of the rectangles in $Q_n$ and dividing by the box size $\delta^n$.
% So we get, roughly,
% $$\sum_{\omega\in Q_n}h_\omega\delta^{-n}\approx N_{\delta^n}(G).$$
 To be more precise, we first notice that this way of counting could have missed out on the top and bottom of each rectangle, and therefore we have an upper bound of 
 $$N_{\delta^n}(G)\le2\#Q_n+\sum_{\omega\in Q_n}h_\omega\delta^{-n},$$
where $\#Q$ denotes cardinality. Because each $\omega\in Q_n$ satisfies $\delta^{n+1}\le l_\omega\le\delta^{n}$, we have that $\#Q_n\le\delta^{-(n+1)}$. 

On the other hand, 
%because the rectangles indexed by $\omega\in Q_n$ may not have the same width as the boxes being used to cover them, it is possible that there is a more efficient count, by using a single box to span the width of multiple rectangles. However, 
any single box of width $\delta^n$ can meet at most $\delta^{-1}$ many rectangles, since the width of each rectangle is at least $\delta^{n+1}$. Thus
$$\frac{1}{\delta}\sum_{\omega\in Q_n}h_\omega\delta^{-n}\le N_{\delta^n}(G)\le 2\delta^{-(n+1)}+\sum_{\omega\in Q_n}h_\omega\delta^{-n}.$$
Multiplying the left and right sides of this inequality by $\delta^{ns}\delta^{-ns}$ and using the assumptions on the lengths $l_\omega$ for $\omega\in Q_n$, gives the result.
\end{proof}

%%%%%%%%%%%%%%%%%%%%%%%%%%%%%%%%%%%%%%%%%%%%%%%%%%%%%%%%%%%%%%%%%%%%%%%%%
%                                                                       %
%            Final Calculation of Box Counting Dimension                %
%                                                                       %              
%%%%%%%%%%%%%%%%%%%%%%%%%%%%%%%%%%%%%%%%%%%%%%%%%%%%%%%%%%%%%%%%%%%%%%%%%

\begin{proof}[Proof of Theorem \ref{box thm}]
By Lemmas \ref{martingale} and \ref{L2 bounded} and the martingale convergence theorem (see \cite{williams}), there exists a non-negative random variable $Y$ such that (almost surely) $Y_n\to Y$, and $\EE(Y)=\EE(Y_0)=1$. %For this and other probability facts, see \cite{williams}. 
In fact, $Y$ is positive almost surely. Indeed, since $\EE(Y)>0$, $Y=0$ with some probability $p<1$. However, $Y=0$ if and only if the sums through each branch converge to zero. That is 
$$\sum_{\omega\in\mathcal{I}_n^k}h_\omega l_\omega^{s-1}\to 0\quad \mbox{as} \quad n\to\infty$$
for each $k=0,1,\dots,m-1$, where $\mathcal{I}_n^k$ denotes the set of words $\omega\in\mathcal{I}_n$ such that $\omega_1=k$. However, each of these events also happens with probability $p$ independently of one another.
Therefore $p^{m}=p$, which implies $p=0$ since $p<1$. In particular there exist, with probability one, positive (random) constants $M<N$ such that $M<Y_n<N$ for all $n$.

Now Lemma \ref{estimation} implies
$$\sum_{n=1}^\infty \PP(|X_n-Y_n|\ge\eps)\le\frac{C}{\eps^2}\sum_{n=0}^{\infty}\alpha^n<\infty$$
for any $\eps>0$, by Chebyshev's inequality. By the Borel-Cantelli lemma, the probability that $|X_n-Y_n|\ge\eps$ infinitely often is zero. But $\eps$ was arbitrary, so almost surely $|X_n-Y_n|\to0$. Hence, with probability one, 
\begin{equation} \label{eq:Z-sandwich}
M<X_n<N \qquad\mbox{for all sufficiently large $n$}.
\end{equation}
Thus, we have
\begin{align*}
    \dim_B G & = \lim_{n\to\infty}\frac{\log N_{\delta^n}(G)}{-n\log\delta}
     = \lim_{n\to\infty}\frac{\log(\delta^{-ns}X_n)}{-n\log\delta}\\
    & = s-\lim_{n\to\infty}\frac{\log X_n}{n\log\delta} = s.
\end{align*}
The first equality comes from the discussion following Definition 3.1 in \cite{falconer}; namely, it is sufficient to consider the limit along an exponentially decreasing sequence. The second equality follows by Lemma \ref{counting of boxes}, and the last equality follows from \eqref{eq:Z-sandwich}.
\end{proof}

\section*{Funding}
Allaart was partially supported by the Simons Foundation [\#709869].

%\section{Acknowledgements}
%Thank you for reading!
%\printbibliography

\end{document}